\newcommand{\HE}{Namie of Handling Editor}
\newcommand{\DoS}{Month/Day/Year}
\newcommand{\DoA}{Month/Day/Year}
\newcommand{\CA}{Vu Le Anh}
\newcommand{\Names}{Hieu Ha Van, Le Anh Vu and Hoa Duong Quang}
\newcommand{\Title}{Classification of a Class of Solvable Real Lie Algebras by using Techniques in Matrix Theory}
\newtheorem{remark}[theorem]{Remark}
\newcommand{\reals}{\mathbb{R}}
\newcommand{\complex}{\mathbb{C}}
\newcommand{\Gc}{\mathcal G}
\newcommand{\Hc}{\mathcal H}
\DeclareMathOperator*{\ima}{Im}
\DeclareMathOperator*{\rank}{rank}
\newcommand{\MD}[2]{\text{MD}_{#2}(#1)}
\newcommand{\ad}[1]{\textnormal{ad}_{#1}}
\newcommand{\adg}[1]{\textnormal{ad}^1_{#1}}
\newcommand{\adgg}[1]{\textnormal{ad}^2_{#1}}
\begin{document}

\bibliographystyle{plain}

\setcounter{page}{1}

\thispagestyle{empty}

 \title{\Title\thanks{Received
 by the editors on \DoS.
 Accepted for publication on \DoA. 
 Handling Editor: \HE. Corresponding Author: \CA}}

\author{
    Hieu Ha Van\thanks{University of Economics and Law, Vietnam National University Ho Chi Minh City, Vietnam
    (hieuhv@uel.edu.vn, \newline 
    vula@uel.edu.vn). This research is funded by Vietnam National University Ho Chi Minh City (VNU-HCM), under grant number B2022-34-05.}
    \and
    Vu Le Anh\footnotemark[2]
    \and
    Hoa Duong Quang\thanks{Hoa Sen University, Vietnam (hoaduongquang@hoasen.edu.vn).}
}

\markboth{\Names}{\Title}

\maketitle

\begin{abstract}
We give a complete classification of the class of Lie algebras of simply connected real Lie groups whose nontrivial coadjoint orbits are of codimension 1. Such a Lie group belongs to a well-known class, called the class of MD-groups. The Lie algebra of an MD-group is called an MD-algebra. Some interest properties of MD-algebras will be investigated as well. The techniques used in this paper is elementary techniques in matrix theory and available to apply to more general cases.
\end{abstract}

\begin{keywords}
Solvable Lie algebras, Lie groups, MD-algebras, MD-groups, Coadjoint orbits, Classification of Lie groups, Commuting matrices, adjoint representation.
\end{keywords}
\begin{AMS}
17B08,17B30. 
\end{AMS}

\section{Introduction}
    
    As we know, an $n$-dimensional Lie algebra can be defined via the commutators of its basis vectors. More precisely, in order to define an $n$-dimensional Lie structure $P$ on an $n$-dimensional vector space $\Gc$ spanned by $\{x_1,x_2,\dots,x_n\}$, we need to define the Lie brackets 
    \begin{equation*}
        [x_i,x_j]=\sum\limits_{k=1}^n a_{ij}^kx_k \quad (1\leq i,j\leq n)
    \end{equation*}
    which satisfy two properties:
    \begin{align}
        [x_i,x_j]=-[x_j,x_i] & \quad \forall i,j \tag*{(skew-symmetrix)} \\ 
        \left[x_i,[x_j,x_k]\right] +\left[x_j,[x_k,x_i]\right]+\left[x_k,[x_i,x_j]\right] = 0 & \quad \forall i,j,k. \tag*{(Jacobi identity)}
    \end{align}
    In other words, we need to consider the family $\{\ad{x_i}:i=1,2,..,n\}$ of $n$ adjoint operators on $\Gc$ which satisfies the following requirements:
     \begin{align}
        \ad{x_i}(x_j)=-\ad{x_j}(x_i) & \quad \forall i,j \tag*{(skew-symmetrix)} \\ 
        \ad{x_i}\circ\ad{x_j} -\ad{x_j}\circ\ad{x_i} = \ad{[x_i,x_j]} & \quad \forall i,j. \tag*{(Jacobi identity)}
    \end{align}
    Let's denote by $A_i$ the $n \times n$ matrix  of the adjoint operator $\ad{x_i}$ with respect to the basis $\{x_1, \ldots, x_n\}$, $i = 1, \ldots, n$. Note that the family of matrices $\mathcal{A}:=(A_1,\ldots,A_n)$ is determined by the $n$-dimensional Lie structure $P$. 
    
    According to Kirillov et al. \cite{KN87}, the set $\mathcal{P}$ of all $n$-dimensional Lie structures $P$ on $\Gc$ determines the set $\mathcal{A}_n : = \{\mathcal{A} : P \in \mathcal{P}\} \subset \Gc \otimes \Gc^* \otimes \Gc^*$. Due to the skew-symmetry and Jacobi identity, the set $\mathcal{A}_n$ forms an algebraic veriety in $n^3$-dimensional space $\Gc \otimes \Gc^* \otimes \Gc^*$ and it is called the variety of $n$-dimensional Lie algebra structures in term of A. A. Kirillov et al. \cite{KN87}.
   
   Note that, by Kirillov et al. \cite{KN87}, the natural action of the general linear group $GL(\Gc) \equiv GL(n, \reals)$  on $\Gc \otimes \Gc ^{*} \otimes \Gc ^{*}$ takes the variety $\mathcal{A}_n$ into itself and the orbits of $GL(n, \reals)$ in $\mathcal{A}_n$ correspond to the isomorphic classes of $n$-dimensional Lie algebras. In other words, the orbit space $\mathcal{A}_n / GL(n, \reals)$ coincides with the set of isomorphic classes of $n$-dimensional real Lie algebra. Hence, the number of parameters increases drastically and the volume of calculations, therefore, will become enormous when $n$ not small \cite{Boza13}. That explains why the problem of classification of Lie algebras (as well as Lie groups) of dimension $n$ strictly greater than 6 is still open, in general, until now. In some certain cases, the problem of the classification is proven to be ``will" and ``hopeless" \cite{FKPS18,FKPS18-2,FKPS18-3,FKPS18-4}.
    
    However, it is still possible to classify some interest subclass of solvable Lie algebras (of general finite dimensions) added one or some special properties in which we can use techniques in matrix theory to make the matrices in the family $\mathcal{A} = (A_1, \ldots, A_n)$ to be as simple as possible. This paper will investigate such a class, called MD-algebras. Basically, we know that Lie groups whose coadjoint orbits are all trivial (i.e. of dimension 0) are trivial. But, in the case they have non-trivial coadjoint orbits, the problem of classification is not easy and still open. The basis idea of MD-groups (as well as MD-algebras) is to consider Lie groups (and their Lie algebras) in which all non-trivial coadjoint orbits are of the same dimension. This idea comes from the Kirillov's method \cite{Kiri} and propsed by Do Ngoc Diep while searching for the class of Lie groups whose $C^*$-algebra can be characterized by BDF K-functions in 1980 \cite{Do99}. More precisely, Diep recommended to consider finite-dimensional solvable and simply connected Lie groups whose $K$-orbits are either of dimension 0 or of a constant positive dimension (which is called the maximal dimension). This Lie group is said to have the property MD and is called an MD-group. The Lie algebra of an MD-group is called a Lie algebra having the property MD or an MD-algebra for brevity. 
	It is worth pointing out that the family of $K$-orbits of the maximal dimension of an MD-group forms a measured foliation in terms of A. Connes \cite{Le90-2,Con82}. 
	
	The problem of classifying MD-algebras has received much attention in recent decades, but the results are not much.
	\begin{itemize}
	    \item In 1984, Vuong Manh Son and Ho Huu Viet gave a complete classification of MD-groups whose maximal dimension of $K$-orbits is equal to the dimension of the group \cite{Son-Viet}.
	    \item In 1990, Vu L. A. (the second author of this paper) gave the classification (up to isomorphism) of  all MD-algebras of dimension 4 \cite{Le90-2, Le93}. 
	    \item In 1995, D. Arnal et al. gave a list of all MD-algebras in which the maximal dimension of $K$-orbits of corresponding MD-groups is 2 \cite{Arnal}. 
	    \item In 2012, Vu L. A., Hieu H. V. (the first author of this paper) et al. gave the classification (up to isomorphism) of MD-algebras of dimension 5 \cite{Vu-Shum, LHT11}.
	    \item In 2016, Vu L. A., Hieu H. V. et al. classified (up to isomorphism) all of MD-algebras which have the first derived ideal of codimension 1 or 2 \cite{Vu-MD}.
	    \item In 2019, M. Goze et al. gave the classification (up to isomorphism) of all of MD-algebras such that their nontrivial $K$-orbits are of dimension 4 \cite{GR19}.
	\end{itemize}
	
	Basically, the approaches of the classifying MD-algebras obtained in the above papers can be divided into the following directions: (1) fixing the dimensions of Lie algebras and (or) the dimensions of the derived series \cite{Le90-2, Le93, Vu-Shum, LHT11, Vu-MD}, (2) fixing the maximal dimension of K-orbits \cite{Arnal, GR19}, and (3) fixing the codimension of K-orbits \cite{Son-Viet}. Our paper follows the third direction. Specifically, we will classify  all MD-algebras whose nontrivial $K$-orbits are of codimension 1 (i.e. the maximal dimension of $K$-orbits is 1 less than the dimension of such a Lie algebra).
 
    The paper is organized into three sections, including this introduction. In Section 2 we recall some basic preliminary concepts, notations and properties which will be used throughout the paper. The last section will be devoted to setting and proving the main results of the paper. 

\section{\bf Preliminaries}
    Throughout this paper, the following notations will be used.
    \begin{itemize}
    	\item An $n\times n$ matrix whose $(i,j)$-entry is $a_{ij}$ will be written as $(a_{ij})_{n\times n}$. While the $(i,j)$-entry of a matrix $A$ will be denoted by $(A)_{ij}$. To simplify notation we use the same letter $A$ for the endomorphism of $\mathbb{F}^n$ which is defined by assigning a column vector $v\in\mathbb{F}^n$ to $Av$, assumed that the entries of $A$ take values in $\mathbb{F}$. Note that $\ima A$ is equal to the $\mathbb{F}$-vector space spanned by the columns of $A$. We shall denote by $\boldsymbol{0}$ the zero matrix of suitable size.
    	\item With the notation $V=\langle v_1,v_2,\dots,v_n\rangle$, we mean $v_1,v_2,\dots,v_n$ is a basis of the vector space $V$.
    	The dual space of $V$ will be denoted by $V^*$. It is well-know that if $V=\langle v_1,\dots,v_n\rangle$ then $V^*=\langle v_1^*,\dots,v_n^*\rangle$ where each $v_i^*$ is defined by $v_i^*(v_j)=\delta_{ij}$ (the Kronecker delta symbol)  for $1 \leq i, j \leq n$.
    	\item By $\Gc$ we always mean a solvable Lie algebra of finite dimension over the real field with Lie bracket $[\cdot,\cdot]$. 
    	For any $x \in \Gc$, we will denote by $\ad{x}$ the adjoint action of $x$ on $\Gc$, i.e. $\ad{x}$ is the endomorphism of $\Gc$ defined by $\ad{x}(y)= [x, y]$ for every $y \in \Gc$.  
    	By $\adg{x}$, $\adgg{x}$ we mean the restricted maps of $\ad{x}$ on the first derived ideal $\Gc^1:=[\Gc,\Gc]$ and the second derived ideal $\Gc^2:=[\Gc^1,\Gc^1]$, respectively. Since both $\Gc^1$ and $\Gc^2$ are ideals of $\Gc$, we shall consider $\adg{x}$ and $\adgg{x}$ as endomorphisms of $\Gc^1$ and $\Gc^2$, respectively. Similary, we will denote by $F^1$ the restriction $F|_{\Gc^1}$ of any form $F \in \Gc^*$ on $\Gc^1$.	
    \end{itemize}
    Next, we will recall some definitions, elementary results about coadjoint representation of solvable Lie groups. For more details, we refer reader to \cite{Kiri}.

    \begin{definition}
        {\rm Let $G$ be a Lie group and let $\Gc$ be its Lie algebra. If $\text{Ad}:G \rightarrow  \text{Aut}(\Gc)$ denotes the adjoint representation of $G$. Then the action
        \[ 
            \begin{array}{lrll}
                 K: & G & \rightarrow & \text{Aut}(\Gc^*) \\
                    & g & \mapsto & K_g
            \end{array}
        \]
        defined by
        \begin{equation*}
            K_g(F)(x)=F\bigl(\text{Ad}(g^{-1})(x)\bigr); \text{ for } F\in\Gc^*, x\in\Gc
        \end{equation*}
        is called the {\em coadjoint representation} of $G$ in $\Gc^*$.
        Each orbit in the coadjoint representation of $G$ is called a {\em coadjoint orbit} (or a {\em $K$-orbit} for short) of $G$ as well as of $\Gc$. }
    \end{definition}
    
    For each $F\in\Gc^*$, the $K$-orbit that passes through the point $F$ is denoted by $\Omega_F$, i.e. 
    \begin{equation*}
        \Omega_F=\{K_g(F): g\in G\}.
    \end{equation*}

	\begin{definition}\cite[Section 15.1]{Kiri} \label{bf}
	{\rm Let $F$ be any element in $\Gc^*$. The Kirillov's bilinear form with respect to $F$ is defined by
		\[ B_F (x, y) : = F([x, y]); \text{ for } x, y \in \Gc.\]
	It is a bilinear skew symmetric form on $\Gc$.}
	\end{definition}

The matrix of $B_F$ with respect to a basis $\{x_1,x_2,\dots,x_n\}$ of $\Gc$, which is equal to $\bigl(F([x_i,x_j])\bigr)_{n\times n}$, is called the Kirillov's matrix of $F$ with respect to that basis. From now on, when a basis of $\Gc$ is fixed, we will always treat $B_F$ as the Kirillov's matrix of $F$ with respect to the fixed matrix. The set of matrices $\{B_F:F\in\Gc^*\}$ will be called the Kirillov's matrices of $\Gc$. The following proposition gives us a connection between dimension of $\Omega_F$ and the rank of $B_F$.

    \begin{proposition}\cite[Section 15.3]{Kiri} \label{rankbf} Let $F$ be any element in $\Gc^*$. Then 
        \begin{equation*}
            \dim \Omega_F=\rank{B_F}.
        \end{equation*}
    \end{proposition}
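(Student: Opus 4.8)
The plan is to realize the coadjoint orbit $\Omega_F$ as the homogeneous manifold $G/G_F$, where $G_F:=\{g\in G: K_g(F)=F\}$ is the stabilizer of $F$, and then to recover its dimension through the tangent space at the base point $F$. Since $\Omega_F$ is a smooth orbit of the Lie group action $K$, its dimension equals the dimension of its tangent space $T_F\Omega_F$, and this tangent space is precisely the image of the infinitesimal coadjoint action, namely the image of the linear map $\Phi:\Gc\to\Gc^*$ sending each $x\in\Gc$ to the velocity at $F$ of the one-parameter subgroup generated by $x$. Thus the task reduces to identifying $\Phi$ explicitly and computing its rank.

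First I would compute $\Phi(x)$ by differentiating the defining formula for $K_g$. Writing $g=\exp(tx)$ and using $\text{Ad}(\exp(-tx))=\exp(-t\,\ad{x})$, we obtain for every $y\in\Gc$ that
\[
\Phi(x)(y)=\left.\frac{d}{dt}\right|_{t=0}K_{\exp(tx)}(F)(y)=\left.\frac{d}{dt}\right|_{t=0}F\bigl(\exp(-t\,\ad{x})(y)\bigr)=-F\bigl([x,y]\bigr)=-B_F(x,y).
\]
Hence, with respect to the fixed basis $\{x_1,\dots,x_n\}$ of $\Gc$ and the corresponding dual basis of $\Gc^*$, the matrix representing $\Phi$ is exactly $-B_F^{\top}$, i.e. the Kirillov matrix of $F$ up to sign and transposition, neither of which affects the rank.

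From this the conclusion is immediate. Since $T_F\Omega_F=\ima\Phi$, the skew symmetry of $B_F$ gives
\[
\dim\Omega_F=\dim\ima\Phi=\rank\bigl(-B_F^{\top}\bigr)=\rank B_F.
\]
The only genuinely nontrivial ingredient, which I would quote as a standard fact from the theory of Lie group actions (see \cite{Kiri}), is the identification of the tangent space to an orbit with the image of the infinitesimal action map; the point requiring the most care is the sign convention produced by the $g^{-1}$ appearing in the definition of $K_g$, which yields the minus sign above but, as noted, leaves the rank unchanged.
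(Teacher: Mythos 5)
Your proposal is correct, but note that the paper does not prove this proposition at all: it is quoted verbatim from Kirillov \cite[Section 15.3]{Kiri} as a known result. Your argument --- realizing $\Omega_F\cong G/G_F$, identifying $T_F\Omega_F$ with the image of the infinitesimal coadjoint action $x\mapsto -F([x,\cdot\,])$, whose matrix in the dual basis is $-B_F^{\top}$ (indeed equal to $B_F$ by skew symmetry), and concluding $\dim\Omega_F=\rank B_F$ --- is precisely the standard proof underlying that citation, with the sign from the $g^{-1}$ in $K_g$ correctly tracked and correctly dismissed as irrelevant to the rank.
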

    
    As a consequence, $\dim\Omega_F$ is always even for every $F$ in $\Gc^*$ and $\dim\Omega_F>0$ if and only if $F^1$ is not equal to zero, i.e. $F|_{\Gc^1} \neq 0$. It is trivial to see that if $\dim\Omega_F=0$ for every $F\in\Gc^*$ then $\Gc$ is commutative. Our concern will be Lie algebras in which the dimensions of non-trivial $K$-orbits are equal.

    \begin{definition}\cite[Introduction]{Son-Viet}\label{definitionMD} {\rm A 
    finite dimensional solvable and simply connected Lie group is called an {\em MD-group} (in terms of Do Ngoc Diep) if its $K$-orbits are either of dimension zero (trivial ones) or of the same positive constant dimension.  The Lie algebra of an MD-group is called an {\em MD-algebra}. }
    \end{definition}
    
    Let $G$ be an MD-group. Then the positive constant dimension of $K$-orbits is also called the maximal dimension of $K$-orbits of $G$ (as well as of the corresponding Lie algebra $\Gc$). If this maximal dimension is given by $k$ ($k> 0$), we also say that $\Gc$ is an MD$_k$-algebra. Furthermore, if $\dim\Gc=n$ then $\Gc$ is also called an MD$_k(n)$-algebra. In particular, if an MD-group $G$ satisfies one of the following properties: either all $K$-orbits are trivial or the maximal dimension of $K$-orbits is equal to the dimension of $G$ then $G$ is said to be an SMD-group. The Lie algebra of an SMD-group is called an SMD-algebra. It is clear that any commutative Lie algebra is an SMD-algebra and it has no maximal dimension of $K$-orbits because its $K$-orbits are all trivial. 
    
    In order to classify MD$_{n-1}(n)$-algebras, we firstly note that all solvable Lie algebras of dimension $n < 4$ are obviously MD-algebras and can be classified easily. Therefore we only take interest in MD-algebras of dimension $n \geq 4$.
    On the other hand, in 1984 Vuong Manh Son and Ho Huu Viet gave a bound for the solvable index of MD-algebras which is presented in the following proposition.

    \begin{proposition}\cite[Theorem 4]{Son-Viet}\label{nescodiMD}
        If $\Gc$ is an MD-algebra then its second derived ideal $\Gc^2$ is commutative. 
    \end{proposition}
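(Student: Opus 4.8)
The plan is to work entirely with the Kirillov matrices $B_F$ and Proposition~\ref{rankbf}, turning the global MD-hypothesis into a pointwise algebraic statement about the stabilizers. First I would record two elementary reductions. Since $[x,y]\in\Gc^1$ for all $x,y$, the matrix $B_F$ depends only on $F^1=F|_{\Gc^1}$; and by the remark following Proposition~\ref{rankbf}, $\rank B_F>0$ precisely when $F^1\neq0$. Thus the MD property says exactly that there is a constant $k>0$ with $\rank B_F=k$ for every $F\in\Gc^*$ satisfying $F^1\neq0$ (the case of abelian $\Gc$ being trivial, so I assume $\Gc^1\neq0$, whence $\Gc^2\subsetneq\Gc^1$ by solvability). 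Writing $n=\dim\Gc$, the radical $\Gc_F:=\{x\in\Gc:F([x,\Gc])=0\}$ of $B_F$ then has dimension $n-k$ for every such $F$.

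The heart of the argument is a lemma: for each $F$ with $F^1\neq0$ the radical $\Gc_F$ is abelian. To prove it I would fix such an $F$, take any $F'\in\Gc^*$, and consider the pencil $B_F+tB_{F'}=B_{F+tF'}$. For all small $t$ the form $F+tF'$ still restricts nontrivially to $\Gc^1$, so the pencil has constant rank $k$. Choosing a complement $S$ to $R:=\Gc_F$ and writing the pencil in block form relative to $\Gc=R\oplus S$, at $t=0$ it equals $\mathrm{diag}(\boldsymbol 0,Y_0)$ with $Y_0$ an invertible $k\times k$ skew block, because $R$ is exactly the radical of $B_F$. The $R$--$S$ block therefore vanishes to first order in $t$, and a Schur-complement computation (legitimate since $Y_0$, hence the bottom-right block, is invertible for small $t$) gives $\rank(B_F+tB_{F'})=k+\rank\bigl(t\,P+t^2(\cdots)\bigr)$, where $P$ is the matrix of the restriction $B_{F'}|_{R}$. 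Constancy of the rank forces $t\,P+t^2(\cdots)=\boldsymbol 0$; dividing by $t$ and letting $t\to0$ yields $P=\boldsymbol 0$, i.e. $R$ is totally isotropic for $B_{F'}$. As $F'$ is arbitrary, $F'([v,w])=0$ for all $v,w\in R$ and all $F'\in\Gc^*$, and since $[v,w]\in\Gc^1$ this gives $[v,w]=0$. Hence $\Gc_F$ is abelian.

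To finish I would exploit the freedom in choosing $F$. Because $\Gc^2\subsetneq\Gc^1$, I can pick $F$ with $F|_{\Gc^2}=0$ but $F^1\neq0$. For such $F$ and any $x\in\Gc^2$ one has $[x,\Gc]\subseteq\Gc^2\subseteq\ker F$, so $\Gc^2\subseteq\Gc_F$; by the lemma $\Gc_F$ is abelian, and therefore so is its subspace $\Gc^2$. The main obstacle is precisely the constant-rank step: one must convert the coarse information ``all nontrivial $K$-orbits have the same dimension'' into the sharp pointwise conclusion that each stabilizer $\Gc_F$ is abelian, and the cleanest route I see is the Schur-complement analysis of the skew pencil above — exactly the sort of elementary matrix computation advertised in the paper. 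Once that lemma is in hand, the passage to $\Gc^2$ is only bookkeeping with the derived ideals.
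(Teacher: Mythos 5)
Your proof is correct, but there is no in-paper argument to compare it with: the paper imports Proposition \ref{nescodiMD} from \cite{Son-Viet} as a black box and never proves it. Taken on its own terms, your argument is sound. The key lemma --- that the radical $\Gc_F=\{x\in\Gc:F([x,\Gc])=0\}$ is abelian at any $F$ where $\rank B_F$ is locally maximal/constant --- is precisely the Duflo--Vergne stabilizer theorem, and your pencil argument is the standard proof of it: since $F^1\neq 0$ forces $(F+tF')^1\neq 0$ for all small $t$, the MD hypothesis pins $\rank B_{F+tF'}$ at $k$; with $\Gc=R\oplus S$ and $Y_0=B_F|_{S\times S}$ invertible of size $k$, the Schur-complement identity $\rank(B_F+tB_{F'})=k+\rank\bigl(tP+t^2Q(Y_0+tY_1)^{-1}Q^t\bigr)$ forces $P+tQ(Y_0+tY_1)^{-1}Q^t=\boldsymbol{0}$ for small $t\neq 0$, and since $Q(Y_0+tY_1)^{-1}Q^t$ stays bounded as $t\to 0$ you correctly conclude $P=\boldsymbol{0}$, hence $F'$ vanishes on $[R,R]$ for every $F'$, hence $[R,R]=0$. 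The endgame also checks out, with one detail you use silently and should state: $\Gc^2$ is an ideal of $\Gc$ (immediate from Jacobi, since $\Gc^1$ is an ideal), which is what gives $[x,\Gc]\subseteq\Gc^2\subseteq\ker F$ for $x\in\Gc^2$ and hence $\Gc^2\subseteq\Gc_F$; solvability together with $\Gc^1\neq 0$ supplies $\Gc^2\subsetneq\Gc^1$, so a suitable $F$ with $F|_{\Gc^2}=0$ and $F^1\neq 0$ exists. What your route buys is a short, self-contained, purely matrix-theoretic proof in exactly the style the paper advertises, sparing the reader a chase through the 1984 reference; whether it coincides with Son and Viet's original argument cannot be judged from this paper.
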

    
    Therefore, if $\Gc$ is an MD-algebra then the third derived ideal $\Gc^3:=[\Gc^2,\Gc^2]$ is the trivial vector space. In this paper, a solvable Lie algebra is said to be {\it $i$-step solvable} if $\Gc^i$ is non trivial (i.e. $\Gc^i\neq \{0\}$) and commutative. By using this term, Proposition \ref{nescodiMD} means that any non-commutative MD-algebra is either 1-step or 2-step solvable. The basis idea of the classification we use in this paper is to classify 1-step solvable MD$_{n-1}(n)$-algebras firstly and to connect 2-step solvable MD$_{n-1}(n)$-algebras with 1-step solvable MD$_{n-1}(n-1)$-algebras afterwards. To deal with 1-step solvable ones, it is necessary to recall the following classical result.

	\begin{proposition}\cite[Lemma 3.1]{Vu-Shum}\label{commutative}
	If $\Gc$ is 1-step solvable then 
	\begin{equation*}
    	\adg{x} \circ \adg{y}=\adg{y}\circ \adg{x} \quad \forall x,y \in\Gc.
    \end{equation*}
	In other words, $\{\adg{x}:x\in \Gc\}$ is a commuting family of linear endomorphisms of $\Gc^1$.
	\end{proposition}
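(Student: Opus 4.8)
The plan is to reduce the claim to a single application of the Jacobi identity. Since $\Gc^1$ is an ideal of $\Gc$, for any $x \in \Gc$ the operator $\adg{x} = \ad{x}|_{\Gc^1}$ is a well-defined endomorphism of $\Gc^1$, so the composition $\adg{x} \circ \adg{y}$ makes sense on $\Gc^1$. Fixing $x, y \in \Gc$ and an arbitrary $z \in \Gc^1$, what I must show is precisely $[x,[y,z]] = [y,[x,z]]$, since this is the statement that $\adg{x}\circ\adg{y}$ and $\adg{y}\circ\adg{x}$ agree on every element $z$ of $\Gc^1$.

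First I would expand the left-hand side using the Jacobi identity, rewritten in the form $[x,[y,z]] = [[x,y],z] + [y,[x,z]]$. This isolates the difference $\adg{x}\circ\adg{y}(z) - \adg{y}\circ\adg{x}(z) = [[x,y],z]$, reducing the entire problem to showing that the single bracket $[[x,y],z]$ vanishes for every $z \in \Gc^1$.

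Now I would invoke the hypothesis. By definition $[x,y] \in \Gc^1$, and $z \in \Gc^1$ as well, so $[[x,y],z] \in [\Gc^1,\Gc^1] = \Gc^2$. The assumption that $\Gc$ is 1-step solvable means exactly that $\Gc^1$ is commutative, i.e. $\Gc^2 = \{0\}$; hence $[[x,y],z] = 0$. Combining this with the previous step gives $[x,[y,z]] = [y,[x,z]]$ for every $z \in \Gc^1$, which is the desired identity $\adg{x}\circ\adg{y} = \adg{y}\circ\adg{x}$.

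Because the argument is a direct consequence of the Jacobi identity together with the commutativity of $\Gc^1$, I do not anticipate any real obstacle. The only point deserving care is to keep track of the two distinct roles played by elements of $\Gc^1$: as the input $z$ on which the operators act, and as the value $[x,y]$ produced by the outer bracket. Both memberships are needed in order to land the critical term $[[x,y],z]$ inside $\Gc^2$, where the hypothesis forces it to be zero.
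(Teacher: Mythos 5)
Your proof is correct: the Jacobi identity in the form $[x,[y,z]] = [[x,y],z] + [y,[x,z]]$ together with $[[x,y],z] \in \Gc^2 = \{0\}$ is exactly the standard argument, and you rightly note both that $\Gc^1$ being an ideal makes $\adg{x}$ well-defined and that $[x,y]$ and $z$ each lie in $\Gc^1$ so the critical bracket lands in $\Gc^2$. The paper itself states this proposition without proof, citing \cite[Lemma 3.1]{Vu-Shum}, and your argument is precisely the proof given in that reference.
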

	
    Hence, it is reasonable to consider the set of commuting matrices over the real field. It is well-known that an arbitrary set of commuting  matrices over the complex field $\complex$ has an invariant subspace of dimension 1. Therefore, it may be simultaneously brought to triangular form by a unitary similarity \cite{Morris,Heydar}. Similarly for the triangular form of an arbitrary set of commuting real matrices. We state these classical results in the following Proposition.
    
    \begin{proposition}\label{triangular}
        Let $\mathcal{S}\subseteq M_{n\times n}(\mathbb{F})$ be a set of commuting $n\times n$ matrices over a field $\mathbb{F}$, i.e. $AB=BA$ for every $A,B\in\mathcal{S}$. 
        \begin{itemize}
        	\item[(i)] If $\mathbb{F}=\complex$ then $\mathcal{S}$ is simultaneously triangularizable by a unitary matrix, i.e there exists a complex unitary matrix $T$ so that $T^{-1}AT$ is upper triangular matrix for all $A \in \mathcal{S}$. 
        	\item[(ii)] if $\mathbb{F}=\reals$ then $\mathcal{S}$ is block simultaneously triangularizable by a orthogonal matrix, i.e. there is a real orthogonal matrix $T$ so that $T^{-1}AT$ is a block upper triangular matrix, where each diagonal block is of size either 1 or 2, for all $A \in \mathcal{S}$. 
        \end{itemize} 
    \end{proposition}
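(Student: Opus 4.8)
The plan is to reduce both statements to a single structural fact: a commuting family of matrices always admits a small common invariant subspace, on which one diagonal block can be split off, after which one inducts on the dimension. The only difference between the two parts is that over $\complex$ the invariant subspace can be taken one-dimensional, while over $\reals$ one must allow dimension $2$.

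First I would prove (i) by induction on $n$, the case $n=1$ being trivial. The crucial step is to produce a common eigenvector for the whole family $\mathcal{S}$. Let $W\subseteq\complex^n$ be a nonzero $\mathcal{S}$-invariant subspace of minimal dimension; such subspaces exist since $\complex^n$ itself is one. Pick any $A\in\mathcal{S}$; as $\complex$ is algebraically closed, $A|_W$ has an eigenvalue $\lambda$, and the eigenspace $W_\lambda=\{w\in W: Aw=\lambda w\}$ is nonzero. For every $B\in\mathcal{S}$ and $w\in W_\lambda$ we have $A(Bw)=B(Aw)=\lambda Bw$, so $W_\lambda$ is again $\mathcal{S}$-invariant, and minimality forces $W_\lambda=W$. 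Since $A$ was arbitrary, every nonzero vector of $W$ is a common eigenvector. Normalizing one such vector to unit length and completing it to an orthonormal basis by Gram--Schmidt yields a unitary $U$ whose first column is that eigenvector, so each $U^{-1}AU=U^*AU$ is block upper triangular with a $1\times1$ leading block $\lambda_A$. The trailing $(n-1)\times(n-1)$ blocks again form a commuting family, because the conjugated matrices $U^*AU$ commute and multiply blockwise; applying the inductive hypothesis to them and reassembling the unitary change of basis completes (i).

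For (ii) I would run the same induction over $\reals$, the sole change being that the common invariant subspace may have dimension $2$. Viewing the real matrices of $\mathcal{S}$ as complex matrices, the argument above furnishes a common complex eigenvector $v=u+iw$ with $u,w\in\reals^n$ and common eigenvalue $\lambda_A=\alpha_A+i\beta_A$. Separating real and imaginary parts in $Av=\lambda_A v$, and using that each $A$ is real, gives $Au=\alpha_A u-\beta_A w$ and $Aw=\beta_A u+\alpha_A w$, so the real span $W:=\langle u,w\rangle_{\reals}$ is $\mathcal{S}$-invariant of dimension $1$ or $2$. Choosing an orthonormal basis of $W$, extending it to an orthonormal basis of $\reals^n$, and letting $T_1$ be the resulting orthogonal matrix, each $T_1^{-1}AT_1=T_1^{\top}AT_1$ acquires the block form $\left(\begin{smallmatrix} A' & * \\ \boldsymbol{0} & A'' \end{smallmatrix}\right)$ with $A'$ of size $\dim W\in\{1,2\}$.

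The key point that makes the induction go through is that the trailing blocks $A''$ commute: they are exactly the matrices of the endomorphisms induced by $\mathcal{S}$ on the quotient $\reals^n/W$, in the basis given by the images of the last $n-\dim W$ basis vectors, and passing to a quotient by a common invariant subspace sends a commuting family to a commuting family. By the inductive hypothesis there is an orthogonal $T_2$ block-triangularizing $\{A''\}$ with diagonal blocks of size $1$ or $2$; then $T=T_1\,\mathrm{diag}(I_{\dim W},T_2)$ is orthogonal and $T^{-1}AT$ is block upper triangular with all diagonal blocks of size $1$ or $2$, as required. I expect the main obstacle to be the clean production of the common invariant subspace of dimension $1$ or $2$ via the conjugate-eigenvalue pair; once that is established, the inductive descent and the bookkeeping of the orthogonal and unitary changes of basis are routine.
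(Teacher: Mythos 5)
Your proof is correct. The paper offers no proof of this proposition---it is stated as a classical fact with citations to Newman and to Radjavi--Rosenthal, and the surrounding text sketches exactly the route you take---so your argument (a minimal $\mathcal{S}$-invariant subspace forcing a common eigenvector, unitary completion via Gram--Schmidt, blockwise induction, and in the real case the $1$- or $2$-dimensional invariant real span of the real and imaginary parts of a common complex eigenvector) is precisely the standard fleshed-out version of what the paper leaves to the references.
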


\section{\bf Main Results}

In this section, we firstly give some new interest properties of MD$_k(n)$-algebras, especially 1-step solvable MD-algebras [Theorem \ref{theorem0}, \ref{theorem1} \& \ref{theorem2}]. Secondly, we give a connection between 1-step solvable MD-algebras and 2-step solvable MD-algebras [Theorem \ref{theorem3}]. Finally, we will present the complete classification of $\MD{n}{n-1}$-algebras in the last theorem. It is possible that we can apply Theorem \ref{theorem1} and Theorem \ref{theorem3} to obtain the classification of MD$_{k}(n)$-algebras with $n-k$ small (not necessary to be 1) but we will not develop this point here. 

\subsection{Some new characteristics of MD-algebras} 

\begin{theorem}\label{theorem0}
	Let $\Gc$ be a non-commutative Lie algebra. Then $\Gc$ is a decomposable MD$_k$-algebra if and only if it is a trivial extension of an indecomposable MD$_k$-algebra by a finite-dimensional commutative Lie algebra. 
\end{theorem}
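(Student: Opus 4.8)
The plan is to base both implications on a single additivity property of coadjoint-orbit dimensions under direct sums of ideals. First I would record the following observation: if $\Gc = \Gc_1 \oplus \Gc_2$ with $\Gc_1, \Gc_2$ ideals satisfying $[\Gc_1,\Gc_2]=\{0\}$, then $\Gc^* = \Gc_1^* \oplus \Gc_2^*$, and for $F=(F_1,F_2)$ the vanishing of the cross-brackets makes the Kirillov matrix block-diagonal in any basis adapted to the decomposition, with diagonal blocks $B_{F_1}$ and $B_{F_2}$. Proposition \ref{rankbf} then yields $\dim\Omega_F = \rank B_F = \rank B_{F_1}+\rank B_{F_2} = \dim\Omega_{F_1}+\dim\Omega_{F_2}$. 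This additivity is the engine for everything that follows.

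For the ``if'' direction, write $\Gc = \Hc \oplus \mathfrak{a}$ where $\Hc$ is an indecomposable MD$_k$-algebra and $\mathfrak{a}$ is a nonzero finite-dimensional commutative Lie algebra on which $\Hc$ acts trivially. Because $\mathfrak{a}$ is commutative, $B_{F_2}=\boldsymbol{0}$ for every $F_2\in\mathfrak{a}^*$, so the additivity formula collapses to $\dim\Omega_F=\dim\Omega_{F_1}$. Hence the nontrivial $K$-orbits of $\Gc$ are precisely those with $\dim\Omega_{F_1}>0$, and each has dimension $k$ since $\Hc$ is MD$_k$; thus $\Gc$ is MD$_k$. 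Decomposability is immediate from $\mathfrak{a}\neq\{0\}$.

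For the ``only if'' direction, I would first split the finite-dimensional $\Gc$ into a direct sum of indecomposable ideals $\Gc=\Hc_1\oplus\cdots\oplus\Hc_r$ (such a decomposition exists by repeatedly peeling off a proper ideal summand and inducting on $\dim\Gc$). An indecomposable commutative Lie algebra is one-dimensional, so I may absorb all commutative factors into a single commutative ideal $\mathfrak{a}$ and keep the non-commutative indecomposable factors apart. The crux is that there is exactly one non-commutative factor. There is at least one, since $\Gc$ is non-commutative. If there were two, say $\Hc_i$ and $\Hc_j$, then (being non-commutative) each $\Hc_i^1\neq\{0\}$, so by the remark following Proposition \ref{rankbf} each carries a functional with a nontrivial orbit; feeding such a functional on $\Hc_i$ alone into the additivity formula shows, via the MD$_k$ property of $\Gc$, that its orbit has dimension exactly $k$, and likewise for $\Hc_j$. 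But then a functional nontrivial on both $\Hc_i$ and $\Hc_j$ yields $\dim\Omega_F=k+k=2k>k$, a nontrivial orbit of the wrong dimension, contradicting that $\Gc$ is MD$_k$. Hence there is a unique non-commutative indecomposable factor $\Hc$, giving $\Gc=\Hc\oplus\mathfrak{a}$; and restricting to functionals supported on $\Hc$ shows, again by additivity, that every nontrivial orbit of $\Hc$ has dimension $k$, so $\Hc$ is itself an indecomposable MD$_k$-algebra.

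I expect the main obstacle to be the bookkeeping around the decomposition into indecomposables: establishing its existence, arguing that each non-commutative indecomposable factor genuinely admits a nontrivial $K$-orbit (which is exactly the content of $\Hc_i^1\neq\{0\}$ via the remark after Proposition \ref{rankbf}), and confirming that the MD$_k$ property descends to the distinguished factor $\Hc$ while simultaneously forbidding a second non-commutative factor. Once the additivity of orbit dimensions is available, each of these steps is short, but the direct-sum decomposition must be handled with a little care, since it is where both the ``unique non-commutative factor'' and the ``$\Hc$ is MD$_k$'' conclusions originate.
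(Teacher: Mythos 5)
Your proposal is correct and rests on essentially the same mechanism as the paper's proof: the block-diagonal form of the Kirillov matrix on a direct sum of ideals, the resulting additivity $\rank B_{F_1+F_2}=\rank B_{F_1}+\rank B_{F_2}$, and the contradiction obtained from two non-commutative summands (the paper exhibits $0<\dim\Omega_{F_1}<\dim\Omega_{F_1+F_2}$, you exhibit orbits of dimensions $k$ and $2k$ --- the same argument in slightly different clothing). If anything, you are more complete than the paper, which stops after ruling out a decomposition into two non-commutative subalgebras and leaves implicit the steps you spell out: the decomposition into indecomposable ideals, the absorption of the commutative factors, and the descent of the MD$_k$ property to the remaining factor.
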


\begin{proof}
Let's first prove the ``if" part. Assume that $\Gc$ is a direct sum of an MD$_k$-algebra with a finite-dimensional commutative algebra $\reals^r$, i.e. $\Gc=\mathcal{K}\oplus \reals^r$, where $\mathcal{K}$ is an MD$_k$-algebra. If so, we have
\begin{equation}
    \Gc^1=\mathcal{K}^1.
\end{equation}
Denote by $n$ the dimension of $\Gc$. Let's fix a basis $\{x_1,x_2,\dots,x_n\}$ of $\Gc$ so that $\mathcal{K}=\langle x_1,\dots,x_{n-r}\rangle$. For any $F = F_{\mathcal{K}} + F_r \in \Gc ^*$ with $F_{\mathcal{K}} \in \mathcal{K}^*\subseteq \Gc^*$ and $F_r \in (\reals^r)^*\subseteq \Gc^*$, we denote by ${\bar{\Omega}}_{F_{\mathcal{K}}}$ the $K$-orbit of $F_{\mathcal{K}}$ in $\mathcal{K} ^* \subset \Gc ^*$. It is elementary to see that
\begin{equation}
	B_F=\begin{bmatrix}
		{\bar{B}}_{F_{\mathcal{K}}} & \boldsymbol{0}\\
		\boldsymbol{0} & \boldsymbol{0}
	\end{bmatrix}.
\end{equation}
where ${\bar{B}}_{F_{\mathcal{K}}}$ is the Kirillov's matrix of $F_{\mathcal{K}}$ in $\mathcal{K}$ with respect to the basis $\{x_1,\dots,x_{n-r}\}$. Therefore, we get $\rank B_F = \rank {\bar{B}}_{F_{\mathcal{K}}}$. 
Since $\mathcal{K}$ is an MD$_k$-algebra, we have 
\begin{equation}
    \dim{\Omega_{F}}= \dim {\bar{\Omega}}_{F_{\mathcal{K}}} = \left\{
        \begin{array}{ll}
            0 & \text{if } F|_{\mathcal{K}^1} = \boldsymbol{0} \\
            k & \text{elsewhere}
        \end{array}
    \right.
\end{equation}
This proves $\Gc$ is an MD$_k$-algebra, as required. 

It remains to prove the ``only if'' part. By contradiction, assume that $\Gc = \mathcal{K} \oplus \mathcal{L}$ in which both $\mathcal{K}$ and $\mathcal{L}$ are non-commutative sub-algebras of $\Gc$. Let's denote by $n$ and $m$ the dimensions of $\Gc$ and $\mathcal{K}$, respectively. Since $\Gc = \mathcal{K} \oplus \mathcal{L}$, there is a basis $\{x_1,x_2,\dots,x_n\}$ of $\Gc$ so that
\begin{equation}
    \left\{\begin{array}{l}
        \mathcal{K} = \langle x_1,x_2,\dots,x_m\rangle, \\
        \mathcal{L} = \langle x_{m+1},\dots,x_n\rangle.
    \end{array}\right.
\end{equation}
Because both $\mathcal{K}$ and $\mathcal{L}$ are non-commutative, there exist two non-zero elements $F_1,F_2\in(\Gc)^*$ so that
\begin{equation}
    \left\{
        \begin{array}{ll}
            F_1|_{\mathcal{K}^1} \neq \boldsymbol{0}, & F_1|_{\mathcal{L}^1} = \boldsymbol{0},  \\
            F_2|_{\mathcal{K}^1} = \boldsymbol{0}, & F_2|_{\mathcal{L}^1} \neq \boldsymbol{0}.  
        \end{array}
    \right.
\end{equation}
If so, it is elementary to see that
\begin{equation}
    B_{F_1}  
	=\left[\begin{array}{cl}
		M & \boldsymbol{0}\\
	    \boldsymbol{0} & \boldsymbol{0}
	\end{array}\right], \text{ and }
		B_{F_2}  
	=\left[\begin{array}{cl}
		\boldsymbol{0} & \boldsymbol{0}\\
		\boldsymbol{0} & N
	\end{array}\right],
\end{equation}
where $M$ and $N$ are two non-zero matrices of dimension $m\times m$ and $(n-m)\times (n-m)$, respectively. We thus get
\begin{equation*}
    0<\rank{B_{F_1}}<\rank{B_{F_1}}+\rank{B_{F_2}}=\rank{B_{F_1+F_2}}.
\end{equation*}
In other words, 
\begin{equation*}
    0<\dim \Omega_{F_1}<\dim \Omega_{F_1+F_2},
\end{equation*}
a contradiction to the fact that $\Gc$ is an MD-algebra. This completes the proof.
\end{proof}
\begin{theorem}\label{theorem1}
	Let $\Gc$ be a 1-step solvable  $\MD{n}{k}$-algebra so that $\dim\Gc^1 > n-k$. Then we have the following assertions.
	\begin{enumerate}
		\item $\sum\limits_{y\in \Gc} \ima \adg{y} = \Gc^1.$
		\item There is $x$ in $\Gc$ so that $\adg{x}$ is an automorphism on the vector space $\Gc^1$.
		\item $\Gc$ can be expressed as the semidirect sum $\mathcal{L} \oplus_{\rho} \Gc^1$ of a commutative subalgebra $\mathcal{L}$ of $\Gc$ with the first derived ideal $\Gc^1$ by the representation $\rho: \mathcal{L} \rightarrow Der(\Gc^1)$, which is defined by $\rho(x) = \adg{x}$\, 
		for every $x \in \mathcal{L}$.
	\end{enumerate}
\end{theorem}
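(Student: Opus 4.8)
The plan is to handle the three assertions in order, using a block decomposition of the Kirillov matrix as the main computational device and the commutativity of the family $\{\adg{x}\}$ (Proposition \ref{commutative}) as the main structural device. Throughout I would fix a basis $\{x_1,\dots,x_n\}$ of $\Gc$ adapted to the first derived ideal, say $\Gc^1=\langle x_{n-m+1},\dots,x_n\rangle$ with $m=\dim\Gc^1$ and $\mathcal V=\langle x_1,\dots,x_{n-m}\rangle$ a vector-space complement. Since $\Gc^1$ is abelian, for every $F\in\Gc^*$ the Kirillov matrix has the block form $B_F=\bigl(\begin{smallmatrix} P & Q \\ -Q^{T} & \boldsymbol 0 \end{smallmatrix}\bigr)$, where the lower-right $m\times m$ block vanishes and $Q_{i,l}=F^1\bigl(\adg{x_i}(x_{n-m+l})\bigr)$. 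Read through Proposition \ref{rankbf}, the $\MD{}{k}$ hypothesis says exactly that $\rank B_F=k$ whenever $F^1\neq 0$ (and $B_F=\boldsymbol 0$ otherwise).

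For assertion (1), I would first observe that $\sum_{y\in\Gc}\ima\adg{y}=[\Gc,\Gc^1]$, a subspace of $\Gc^1$, and argue by contradiction that it is all of $\Gc^1$. Bounding the rank of $B_F$ by the ranks of its two block rows gives $\rank B_F\le (n-m)+\rank Q$, so the MD condition forces $\rank Q\ge k-(n-m)$, which is strictly positive precisely because $\dim\Gc^1=m>n-k$. If $[\Gc,\Gc^1]$ were a proper subspace of $\Gc^1$, I would pick a nonzero $F^1\in(\Gc^1)^*$ vanishing on $[\Gc,\Gc^1]$ and extend it to $F\in\Gc^*$; then every entry $Q_{i,l}=F^1([x_i,x_{n-m+l}])$ vanishes, so $Q=\boldsymbol 0$ and $\rank B_F\le n-m<k$, contradicting $\rank B_F=k$. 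Hence $[\Gc,\Gc^1]=\Gc^1$.

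For assertion (2) I would pass to the complexification $\Gc^1_{\complex}$ and use that the commuting family $\{\adg{x}\}$ admits a simultaneous decomposition into generalized common eigenspaces $\Gc^1_{\complex}=\bigoplus_{\mu}W_{\mu}$, indexed by weights $\mu$ (linear functionals on $\Gc$), each $W_\mu$ invariant under every $\adg{x}$ with $\adg{x}|_{W_\mu}$ equal to $\mu(x)$ plus a nilpotent part. A single $\adg{x}$ is invertible as soon as $x$ avoids the finitely many proper subspaces $\{\mu=0\}$, which is possible over $\reals$ unless some weight is identically zero. Suppose then that the zero weight occurs, with space $W_0\neq\{0\}$; on $W_0$ the operators $\adg{x}$ form a commuting family of nilpotents, so by Proposition \ref{triangular} their images lie in a common hyperplane of $W_0$. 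Decomposing $\ima\adg{x}$ along the $W_\mu$ then shows $\sum_{x}\ima\adg{x}$ lies in a proper subspace of $\Gc^1_{\complex}$, contradicting assertion (1) after complexification. Thus no weight vanishes (equivalently $Z(\Gc)\cap\Gc^1=\{0\}$) and some $\adg{x}$ is an automorphism of $\Gc^1$. This is the step I expect to be the main obstacle: producing an invertible $\adg{x}$ is not formal and genuinely requires feeding assertion (1) back into the commuting structure to exclude a common null direction.

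For assertion (3), I would fix $h\in\Gc$ with $D:=\adg{h}$ invertible; since $h\notin\Gc^1$ I may take $x_1=h$ in the complement basis above. Writing $c_{ij}:=[x_i,x_j]\in\Gc^1$ for $1\le i,j\le n-m$, the Jacobi identity yields the cocycle relation $\adg{x_i}c_{jk}+\adg{x_j}c_{ki}+\adg{x_k}c_{ij}=\boldsymbol 0$. Specializing $i=1$ and inverting $D$—legitimate because $D$ commutes with every $\adg{x_j}$, hence so does $D^{-1}$—expresses $c_{jk}=\adg{x_j}(D^{-1}c_{1k})-\adg{x_k}(D^{-1}c_{1j})$. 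Consequently, replacing each basis vector by $x_i':=x_i-D^{-1}c_{1i}$ makes all brackets $[x_i',x_j']$ vanish, so $\mathcal L:=\langle x_1',\dots,x_{n-m}'\rangle$ is a commutative subalgebra complementary to $\Gc^1$. Because $\Gc^1$ is abelian we have $Der(\Gc^1)=\mathrm{End}(\Gc^1)$, so $\rho(x):=\adg{x}$ lands in $Der(\Gc^1)$ and is a representation by Proposition \ref{commutative}; this exhibits $\Gc$ as the semidirect sum $\mathcal L\oplus_{\rho}\Gc^1$. The only delicate point here is the explicit use of $D^{-1}$ to solve the coboundary equation, which is exactly the Whitehead-type splitting made available by assertion (2).
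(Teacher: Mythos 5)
Your proposal is correct, and for assertions (1) and (3) it essentially coincides with the paper's own proof: the same contradiction argument choosing $F$ with $F^1 \neq 0$ vanishing on $\sum_{y}\ima \adg{y}$ to force $\rank B_F \leq n-m < k$, and the same basis change complementary to $\Gc^1$ — your one-shot coboundary formula $x_i' = x_i - D^{-1}c_{1i}$ is just a compact packaging of the paper's two steps (first arranging $[x_1,x_j]=0$ using surjectivity of $\adg{x_1}$, then applying Jacobi and injectivity of $\adg{x_1}$ to kill $[x_j,x_k]$), both resting on the invertibility from assertion (2) and on Proposition \ref{commutative}. Where you genuinely diverge is assertion (2). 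The paper proves it via Lemma \ref{lemma32}, an elementary induction on the size of commuting upper-triangular complex matrices showing that when their images span $\complex^m$ some linear combination is nonsingular, and then passes from a complex to a real combination by nonvanishing of the polynomial $\det(\zeta_1\adg{x_1}+\cdots+\zeta_{n-m}\adg{x_{n-m}})$. You instead use the simultaneous generalized-eigenspace (weight) decomposition $\Gc^1_{\complex}=\bigoplus_{\mu}W_{\mu}$ of the commuting family, exclude the zero weight because commuting nilpotents on $W_0$, once strictly triangularized via Proposition \ref{triangular}, have all images inside a common hyperplane — contradicting the complexification of assertion (1) — and then obtain a real $x$ by avoiding the finitely many proper real subspaces $\{\mu = 0\}$. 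Both routes are valid and both feed assertion (1) into the commuting structure at exactly the same point. The paper's lemma is more self-contained (nothing beyond Proposition \ref{triangular} and matrix manipulation is assumed), whereas your argument invokes the primary decomposition for commuting operators, a standard fact but one the paper never states; in exchange your version makes the obstruction transparent — invertibility fails for every $x$ precisely when the zero weight occurs, i.e.\ when $Z(\Gc)\cap\Gc^1 \neq \{0\}$ — and replaces the determinant-polynomial step by the cleaner observation that a real vector space is not a finite union of proper subspaces.
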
   

In order to prove this Theorem, we will need the following lemma.

    \begin{lemma}\label{lemma32}
        Let $\mathcal{S} =\{A_1, A_2,\dots, A_s\}$ be a set of complex upper triangular matrices of size $m\times m$. If $\mathcal{S}$ is commutative (i.e. $AB=BA$ for every $A, B\in \mathcal{S}$) and 
        \begin{equation}\label{sum-m}
            \sum\limits_{i=1}^s \ima A_i = \complex^m
        \end{equation}
        then there is a linear combination of elements of $\mathcal{S}$ so that it is nonsingular.
    \end{lemma}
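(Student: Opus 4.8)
The plan is to reduce the existence of a nonsingular linear combination to a purely ``diagonal'' condition, and then to extract that condition from the hypothesis $\sum_{i=1}^s \ima A_i=\complex^m$ using commutativity. For $c=(c_1,\dots,c_s)\in\complex^s$ write $A(c)=\sum_{i=1}^s c_i A_i$. Since each $A_i$ is upper triangular, so is $A(c)$, and its $(j,j)$-entry is the linear form $\ell_j(c):=\sum_{i=1}^s c_i (A_i)_{jj}$. Hence
\[
\det A(c)=\prod_{j=1}^m \ell_j(c),
\]
so $A(c)$ is nonsingular exactly when $c$ avoids all $m$ forms $\ell_1,\dots,\ell_m$. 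This yields two easy observations: a nonsingular combination exists if and only if no $\ell_j$ vanishes identically, i.e. if and only if for every diagonal position $j$ there is some $i$ with $(A_i)_{jj}\neq 0$ (call this the \emph{diagonal condition}); and, \emph{granting} the diagonal condition, each $\{\ell_j=0\}$ is a proper hyperplane of $\complex^s$, so since $\complex$ is infinite the finite union $\bigcup_j\{\ell_j=0\}$ cannot cover $\complex^s$ and a suitable $c$ exists.

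So the entire problem becomes: show $\sum_i \ima A_i=\complex^m$ forces the diagonal condition. I would argue the contrapositive. Suppose some position $j_0$ has $(A_i)_{j_0 j_0}=0$ for all $i$. Using that the $A_i$ commute, I invoke the simultaneous generalized (joint) eigenspace decomposition $\complex^m=\bigoplus_\alpha V_\alpha$ indexed by the joint weights $\alpha\in\complex^s$, where each $V_\alpha$ is invariant under every $A_i$ and $A_i$ acts on $V_\alpha$ as $\alpha_i\,\mathrm{Id}+(\text{nilpotent})$. Because the common flag furnished by upper-triangularity realizes these weights along the diagonal with multiplicity, the identically-zero diagonal position $j_0$ means the zero weight occurs, so $\dim V_{0}\geq 1$. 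This block structure is exactly where commutativity enters.

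Next I would exploit that the decomposition is a direct sum of invariant subspaces, so images split blockwise, $\ima A_i=\bigoplus_\alpha \ima\!\left(A_i|_{V_\alpha}\right)$, giving
\[
\sum_{i=1}^s \ima A_i=\bigoplus_\alpha\Big(\sum_{i=1}^s \ima\big(A_i|_{V_\alpha}\big)\Big).
\]
On the zero-weight block $V_0$ every $A_i|_{V_0}$ is nilpotent, and these restrictions still commute; by Proposition \ref{triangular}(i) they are simultaneously triangularizable, and being nilpotent they are \emph{strictly} upper triangular in a common basis, whence each $\ima(A_i|_{V_0})$ lies in the span of all but the last basis vector of $V_0$. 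Therefore $\sum_i \ima(A_i|_{V_0})\subsetneq V_0$, which makes $\sum_i \ima A_i$ a proper subspace of $\complex^m$ and contradicts the hypothesis. This proves the diagonal condition, and combined with the first paragraph, the lemma.

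I expect the main obstacle to be the middle step: converting the global, basis-free hypothesis $\sum_i \ima A_i=\complex^m$ into information about a single diagonal slot. A naive induction peeling off the last basis vector fails, because the image-spanning condition does not descend to the invariant flag $\langle x_1,\dots,x_{m-1}\rangle$ (the off-diagonal parts of $A_i$ applied to the last vector leak into the smaller block). Commutativity must be used essentially; it is false without it, as the pair $\left[\begin{smallmatrix}0&1\\0&0\end{smallmatrix}\right]$ and $\left[\begin{smallmatrix}0&0\\0&1\end{smallmatrix}\right]$ shows (their images span $\complex^2$, yet the $(1,1)$-slot is identically zero and every combination is singular). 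The generalized eigenspace splitting is precisely what forces the images to decompose blockwise so the offending nilpotent block can be isolated; the technical point to get right is that the zero weight genuinely occurs, i.e. that $\dim V_0$ equals the number of identically-zero diagonal positions.
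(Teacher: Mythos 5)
Your proof is correct, but it takes a genuinely different route from the paper's. The paper proves the lemma by induction on $m$: after normalizing so that $(A_1)_{mm}\neq 0$ and $(A_i)_{mm}=0$ for $i\geq 2$, it uses the commutation relations $(A_1A_i)_{jm}=(A_iA_1)_{jm}$ to show that the last column of each $A_i$ lies in the span of the columns of $A_1$ together with the first $m-1$ columns of $A_i$; hence the spanning hypothesis \emph{does} descend to the truncated $(m-1)\times(m-1)$ matrices, and the induction closes by arranging $\alpha_1\neq 0$. So the ``naive induction peeling off the last basis vector'' that you dismiss is in fact the paper's strategy: your diagnosis of the leak (off-diagonal parts of the last column escaping into the smaller block) is accurate, but the paper repairs precisely that leak with commutativity rather than abandoning induction. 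Your route instead factors $\det A(c)=\prod_j \ell_j(c)$, reduces the lemma to the diagonal condition plus the infinitude of $\complex$, and then uses the joint generalized eigenspace decomposition: a vanishing diagonal slot forces a nonzero zero-weight block $V_0$, on which the restrictions are commuting nilpotents, whence $\sum_i \ima\bigl(A_i|_{V_0}\bigr)\subsetneq V_0$ and the images cannot span. All of this is sound, including the blockwise splitting of images across the invariant decomposition. The only step you assert rather than prove is the one you flag, that $\alpha^{(j_0)}=0$ on the diagonal forces $\dim V_0\geq 1$; it can be discharged without any multiplicity bookkeeping by the contrapositive: if $V_0=0$, then on each block $V_\alpha$ the combination $A(c)$ acts as $\langle c,\alpha\rangle\,\mathrm{Id}$ plus a nilpotent with $\alpha\neq 0$, so a generic $c$ makes $A(c)$ invertible, hence $\det A(c)=\prod_j\ell_j(c)\not\equiv 0$ and no diagonal slot vanishes identically. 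As for what each approach buys: the paper's argument is elementary and self-contained, in keeping with its stated matrix-theoretic theme; yours is more structural and actually yields the sharper equivalence that a nonsingular combination exists $\Leftrightarrow$ the diagonal condition holds $\Leftrightarrow$ $\sum_i \ima A_i=\complex^m$ $\Leftrightarrow$ $V_0=0$ (the first implies the third because the image of a nonsingular combination sits inside $\sum_i\ima A_i$), and your $2\times 2$ noncommuting counterexample cleanly certifies that commutativity is essential, a point the paper never records.
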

    
    \begin{proof}
        The proof is by induction on $m$. It is trivial for $m=1$. Since $A_i$ is of upper triangular form for every $i$, the equality {\rm (\ref{sum-m})} implies that at least one of $\{(A_i)_{mm}:i=1, \dots, s\}$ is non-zero (if not, the vector $(0,0,\dots,1)^t\notin \sum\limits_{i=1}^s \ima A_i = \complex^m$, a contradiction). By re-indexing if necessary, we may assume that $(A_1)_{mm} \neq 0$. Furthermore, 
        the proof is still correct if we replace $\mathcal{S}$ by $\mathcal{S}'=\{A_1,A_2-\beta_2A_1,\dots,A_s-\beta_sA_1\}$ for every $\beta_2, \dots,\beta_s\in\complex$. Therefore, we may assume from beginning that 
        \begin{equation}\label{sum-m2}
            \left\{\begin{array}{ll}
                (A_i)_{mm}=0 & \forall i\geq 2, \\
                (A_1)_{mm}\neq 0.
            \end{array}\right.
        \end{equation}
        Since $A_1A_i=A_iA_1$ for every $i$, we have
        \begin{equation}
            (A_1A_i)_{jm}=(A_iA_1)_{jm} \text{ for every } i,j.
        \end{equation}
        Therefore, for every $i$, the last column of $A_i$  is a linear combination of the following columns: all the columns of $A_1$ and the first $(m-1)$ columns of $A_i$. 
        It follows that $\sum\limits_{i=1}^s \ima A_i$ is spanned by the last column of $A_1$ and the first $m-1$ columns of $A_1,\dots,A_s$. We conclude from the equality {\rm (\ref{sum-m})} that $\complex^{m-1}$ is spanned by the first $(m-1)$ columns of $A_1, \dots, A_s$. 
        
        In other words, if we denote by $B_i$ the $(m-1)\times (m-1)$ matrix obtained by deleting the $m$-th column and $m$-th row of $A_i$ for each $1\leq i\leq s$ then $\{B_1,B_2,\dots,B_s\}$ satisfies the following properties:
        \begin{itemize}
            \item it is a set of commuting (complex) matrices of dimension $(m-1)\times (m-1)$,
            \item each $B_i$ is of upper triangular form for every $i$, and
            \item $\complex^{m-1}$ is spanned by the columns of $B_1, B_2,\dots,B_s$, i.e.
                \begin{equation}
                    \sum\limits_{i=1}^s \ima B_i = \complex^{m-1}.
                \end{equation}
        \end{itemize}
        By induction, there are $\alpha_1,\alpha_2,\dots,\alpha_s \in \complex$ so that $\alpha_1B_1+\cdots+\alpha_sB_s$ is nonsingular. Remark that we can always choose $\alpha_1 \neq 0$ (if $\alpha_1=0$ then $\alpha_2B_2+\cdots+\alpha_sB_s$ is nonsingular and hence, there exists $\alpha_1'\neq 0$ so that $\alpha'_1B_1+\alpha_2B_2+\cdots+\alpha_sB_s$ is nonsingular because the equation $\det\bigl(B_1+x(\alpha_2B_2+\cdots+\alpha_sB_s)\bigr)=0$ has finite solutions). With the assumption $\alpha_1\neq 0$ in hand, it follows immediately from the equation {\rm (\ref{sum-m2})} and from the nonsingularity of $\alpha_1B_1+\cdots+\alpha_sB_s$ that
        \begin{equation*}
            \alpha_1A_1+\cdots+\alpha_sA_s
        \end{equation*}
       is nonsingular, which proves the lemma.
    \end{proof}
    
    \begin{proof}[Proof of Theorem \ref{theorem1}] Set $m=\dim \Gc^1$. Throughout this proof, we fix a basis $\{x_1,x_2,\dots,x_n\}$ of $\Gc$ so that $\Gc^1=\langle x_{n-m+1}, x_{n-m+2},\dots,x_n\rangle$.
        \begin{enumerate}
            \item By contradiction, assume that $\sum\limits_{y\in\Gc}\ima \adg{y}\subsetneq \Gc^1$. Then there is an element $F\in (\Gc)^*$ so that
            \begin{equation}
                \left\{
                    \begin{array}{ll}
                        F^1 \neq 0 \\
                        F|_{\sum\limits_{y\in\Gc}\ima \adg{y}}=0 & 
                    \end{array}
                \right.
            \end{equation}
            If so, it is clear that
            \begin{equation}
                F([x_i,x_j])=0, \text{ unless } 1\leq i,j\leq n-m.
            \end{equation}
            It follows from Proposition \ref{rankbf} that $k=\dim\Omega_{F}\leq n-m<k$, a contradiction. This proves the first item. 
            
    \vskip0.3cm        
            \item Let $\Gc_\complex=\langle x_1,\dots,x_n\rangle_\complex$ be the complexification of $\Gc$. By Proposition \ref{commutative}, $\{\adg{x_i}:i=1,..,n-m\}$ is a family of commuting endomorphisms of the real type. Hence, there is a basis of $\Gc_{\complex}^1$ so that the matrix of  $\ad{x_i}|_{\Gc_\complex^1}$ with respect to that basis is of upper triangular form for every $1\leq i\leq n-m$ (by Proposition \ref{triangular}). It follows from the first item and Lemma \ref{lemma32} that there is a complex linear combination $x$ of $x_1,x_2,\dots,x_{n-m}$ so that $\ad{x}|_{\Gc_\complex^1}$ is isomorphic. Therefore, by using the same notation $\adg{x_i} (i=1,..,n-m)$ for their matrices with respect to that the basis $\{x_1,\dots,x_n\}$, we easily see that the polynomial
            \[
                \det(\zeta_1\adg{x_1}+\cdots +\zeta_{n-m}\adg{x_{n-m}})
            \]
            of variables $\zeta_1,\dots,\zeta_{n-m}$ does not vanish. Hence, there is $\alpha_1,\dots,\alpha_{n-m}\in\reals$ so that 
            \begin{equation}
                \det(\alpha_1\adg{x_1}+\cdots +\alpha_{n-m}\adg{x_{n-m}}) \neq 0.
            \end{equation}
            This prove the existence of a real linear combination $x'$ of $x_1,x_2,\dots,x_{n-m}$ so that $\adg{x'}$ is nonsingular, as required.
            
    \vskip0.3cm        
            \item It follows from the second item proven above that we may assume (after changing basis if necessary) $\adg{x_1}$ is nonsingular. Hence, for each $j\geq 2$, there are $\alpha_{j1},\alpha_{j2},\dots,\alpha_{jm}\in\reals$ so that
            \begin{equation}
                [x_1,x_j]+\alpha_{j1}[x_1,x_{n-m+1}]+\cdots+\alpha_{jm}[x_1,x_{n}]=0.
            \end{equation}
            Therefore, by basis changing $x'_j=x_j+\sum\limits_{i=1}^m \alpha_{ji} x_{n-m+i}$
            if necessary, we can assume from beginning that $[x_1,x_j]=0$ for all $1\leq j\leq n-m$. With this assumption in hand, we may apply the Jacobi's identity for $(x_1,x_j,x_k)$ to obtain
            	\begin{equation}
                	[x_1,[x_j,x_k]]=0 \text{ for all } 1\leq j, k\leq n-m.
            	\end{equation}
            It follows from the nonsingularity of $\adg{x_1}$ that $[x_j,x_k]=0$ for all $1\leq j,k\leq n-m.$ Hence, $\Gc$ is isomorphic to the semidirect sum of $\Gc^1$ with the commutative subalgebra $\mathcal{L}:= \langle x_1,\dots,x_{n-m}\rangle \cong {\reals}^{n-m}$ of $\Gc$, as required.
    	\end{enumerate}
    \end{proof}

Theorem \ref{theorem1} gains in interest if we realize that, by fixing a basis $\{x_1,x_2,\dots,x_n\}$ of the Lie algebra $\mathcal{L}\oplus_\rho \Gc^1$ so that $\mathcal{L}=\langle x_1,\dots,x_{n-m}\rangle$ and $\Gc^1=\langle x_{n-m+1},\dots,x_n\rangle$, the Kirilov's matrix of any $F\in(\mathcal{L}\oplus_\rho \Gc^1)^*$ is of the following form
\begin{equation*}
    \begin{bmatrix}
        \boldsymbol{0} & M_F\\
        -(M_F)^t & \boldsymbol{0}
    \end{bmatrix}
\end{equation*}
where $M_F$ is some matrix of size $(n-m)\times m$. It turns out that the dimension of each $\Omega_F$ depends on the rank of $M_F$. More precisely, $\dim\Omega_F = 2\rank{M_F}$. 

Besides, it is easily seen that the $i$-th row of $M_F$ is exactly 
\begin{equation*}
    \bigl(F^1\circ \adg{x_i}(x_{n-m+1}), F^1\circ \adg{x_i}(x_{n-m+2}),\dots,F^1\circ \adg{x_i}(x_{n})\bigr),
\end{equation*} 
or the (row) matrix of the form $F^1\circ\adg{x_i}$ on $\Gc^1$ with respect to the basis $\{x_{n-m+1},\dots,x_{n}\}$. Therefore, we can connect the rank of $M_F$, as well as the dimension of $\Omega_F$, with the maximum number of linearly independent matrices in the set 
\begin{equation*}
    \{\adg{x_1},\adg{x_2},\dots,\adg{x_{n-m}}\}.
\end{equation*}
Remark that the vector space spanned by the above set and the vector space spanned by the set $\mathcal{S}:=\{\adg{x}:x\in\mathcal{L}\oplus_\rho\Gc^1\}$ coincide. It follows that
\begin{equation}\label{equ37}
    \dim\Omega_F = 2\rank{M_F}=2\dim F^1(\langle \mathcal{S}\rangle),
\end{equation}
where $\langle \mathcal{S}\rangle$ denotes the vector space spanned by $S$, and $F^1(\langle \mathcal{S}\rangle)$ denotes the vector space $\{F^1\circ f: f\in\langle \mathcal{S}\rangle\}$. In the following theorem, we will use the maximum number mentioned above, which is equal to the dimension of $\langle \mathcal{S}\rangle$, to determine some interest properties of $\Gc$, including the decomposition.
\begin{theorem}\label{theorem2}
	Let $\Gc$ satisfy the hypothesis of Theorem \ref{theorem1} and let $\mathcal{S}$ be the set of adjoint actions of all points in $\Gc$, i.e. $\mathcal{S}=\{\adg{x} : x \in \Gc\}$. Denote by $m$ the dimension of $\Gc^1$ and by $r$ the dimension of the vector space spanned by $\mathcal{S}$. Then the following assertions hold. 
	\begin{enumerate}
		\item $k\leq 2r \leq 2(n - m)$,		
		\item $r = n - m$ if and only if $\Gc$ is indecomposable,
		\item $2r=k$ if and only if every non-zero element in the vector space spanned by $\mathcal{S}$ is an automorphism on the vector space $\Gc^1$.		 
	\end{enumerate}
\end{theorem}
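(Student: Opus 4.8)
The entire argument rests on the formula \eqref{equ37}, namely $\dim\Omega_F = 2\dim F^1(\langle\mathcal{S}\rangle)$, combined with the defining dichotomy of an $\MD{n}{k}$-algebra: $\dim\Omega_F$ equals $0$ when $F^1=0$ and equals $k$ otherwise. The plan is to read off all three assertions from the single consequence that $\dim F^1(\langle\mathcal{S}\rangle)=k/2$ for every $F\in\Gc^*$ with $F^1\neq 0$. For a fixed such $F$ I would regard $f\mapsto F^1\circ f$ as a linear map $\Phi_F\colon \langle\mathcal{S}\rangle\to(\Gc^1)^*$, so that $\dim F^1(\langle\mathcal{S}\rangle)=\rank\Phi_F=r-\dim\ker\Phi_F$, where $\ker\Phi_F=\{f\in\langle\mathcal{S}\rangle:\ima f\subseteq\ker F^1\}$. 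I would also record that, since $\Gc^1$ is abelian, $\adg{w}=0$ for every $w\in\Gc^1$, so $\langle\mathcal{S}\rangle$ is exactly the image of the linear map $\mathcal{L}\to\mathrm{End}(\Gc^1),\ \ell\mapsto\adg{\ell}$; hence $r=(n-m)-\dim Z$, where $Z:=\{\ell\in\mathcal{L}:\adg{\ell}=0\}$.

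For assertion (1) the upper bound is immediate: $\langle\mathcal{S}\rangle$ is spanned by the $n-m$ operators $\adg{x_1},\dots,\adg{x_{n-m}}$, so $r\le n-m$. For the lower bound I would pick any $F$ with $F^1\neq 0$ (one exists because $\Gc^1\neq\{0\}$) and use $k/2=\rank\Phi_F\le\dim\langle\mathcal{S}\rangle=r$, giving $k\le 2r\le 2(n-m)$.

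For assertion (3) I would unwind $2r=k$ as $\rank\Phi_F=r$, i.e. $\ker\Phi_F=\{0\}$, for every $F$ with $F^1\neq 0$. If every nonzero $f\in\langle\mathcal{S}\rangle$ is an automorphism of $\Gc^1$, then $\ima f=\Gc^1\not\subseteq\ker F^1$ (the latter being a proper subspace), so $\ker\Phi_F=\{0\}$ and $2r=k$. Conversely, if some nonzero $f$ fails to be invertible, then $\ima f$ is a proper subspace; I would choose a hyperplane $H\supseteq\ima f$ and an $F$ with $\ker F^1=H$, so that $F^1\neq 0$ yet $f\in\ker\Phi_F$, forcing $\rank\Phi_F<r$ and hence $2r>k$. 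Both implications are short linear-algebra arguments.

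Assertion (2) is where the real work lies, since it must link the numerical quantity $r$ to the structural notion of (in)decomposability. Using $r=(n-m)-\dim Z$, the equality $r=n-m$ is equivalent to $Z=\{0\}$, i.e. to the injectivity of $\ell\mapsto\adg{\ell}$ on $\mathcal{L}$. I would then show that $Z\neq\{0\}$ is equivalent to the existence of a nonzero central element of $\Gc$ lying outside $\Gc^1$: a nonzero $\ell\in Z$ satisfies $[\ell,\Gc^1]=0$ and $[\ell,\mathcal{L}]=0$ (as $\mathcal{L}$ is abelian), hence is central and, being in $\mathcal{L}$, lies outside $\Gc^1$; conversely the $\mathcal{L}$-component of any such central element lies in $Z$. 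Finally I would split off such a central line: writing $\mathcal{L}=\mathcal{L}'\oplus\langle\ell\rangle$ I would check that $\mathcal{K}:=\mathcal{L}'\oplus\Gc^1$ is a subalgebra with $\Gc=\mathcal{K}\oplus\langle\ell\rangle$ a direct sum of ideals, so $\Gc$ is decomposable; for the converse I would invoke Theorem \ref{theorem0}, which presents a decomposable MD-algebra as $\mathcal{K}\oplus\reals^s$ with $\reals^s$ central and $\Gc^1\subseteq\mathcal{K}$, producing exactly such a central element outside $\Gc^1$. The main obstacle I anticipate is organizing this last equivalence cleanly, reconciling the abstract decomposition guaranteed by Theorem \ref{theorem0} with the concrete splitting adapted to the fixed basis and the semidirect presentation $\mathcal{L}\oplus_\rho\Gc^1$, rather than any delicate computation.
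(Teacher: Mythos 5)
Your proposal is correct and takes essentially the same route as the paper's proof: everything is read off from the identity $\dim\Omega_F = 2\dim F^1(\langle \mathcal{S}\rangle)$ of equation \eqref{equ37}, with assertion (2) reduced via Theorem \ref{theorem0} to the existence of a central element outside $\Gc^1$ (equivalently some nonzero $y\in\mathcal{L}$ with $\adg{y}=\boldsymbol{0}$), and assertion (3) via the duality between non-invertibility of $f\in\langle \mathcal{S}\rangle$ and the existence of $F$ with $F^1\neq 0$ and $F^1\circ f=\boldsymbol{0}$. Your packaging through the map $\Phi_F$ and the rank--nullity count $r=(n-m)-\dim Z$ merely makes explicit some steps (notably the converse construction of a decomposition from a central element) that the paper leaves implicit.
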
 

	\begin{proof}
    	Followed by Theorem \ref{theorem1}, $\Gc$ is a direct sum of two commutative subalgebras $\mathcal{L}$ and $\Gc^1$. By the second assertion in that theorem, we can fix a basis $\{x_1, \ldots , x_n\}$ of $\Gc$ so that 
    	\begin{equation}
    	    \left\{\begin{array}{ll}
    	        \mathcal{L} = & \langle x_1, \ldots, x_{n-m}\rangle   \\
    	        \Gc^1 = & \langle x_{n-m+1}, \ldots, x_n\rangle
    	    \end{array}\right.,
    	\end{equation}
    	and $\adg{x_1}$ is an automorphism on the vector space $\Gc^1$.
	\begin{enumerate}
		\item Since $\Gc$ is spanned by $\{x_1,x_2,\dots,x_n\}$, the real vector space spanned by $\mathcal{S}$, $\langle \mathcal{S}\rangle$, is also spanned by $\{\adg{x_i}:i=1,2,\ldots,n\}$. Moreover, the commutation of $\Gc^1$ implies that $\adg{x_i}=\boldsymbol{0}$ for every $n-m+1\leq i\leq n$. Hence, 
		\begin{equation}\label{eq381}
		    r\leq n-m.
		\end{equation}
		On the other hand, in the light of the equation (\ref{equ37}), we easily see that, for any $F\in \Gc^*$ which does not vanish on $\Gc^1$, 
		\begin{equation}\label{eq391}
		    0\neq \dim \Omega_F = 2\dim F^1(\langle \mathcal{S}\rangle)\leq 2\dim\langle \mathcal{S}\rangle =2r.
		\end{equation}
	    The inequalities (\ref{eq381}) and (\ref{eq391}) establish the formula in the first item.
	    
    \vskip0.3cm		
		\item By Theorem \ref{theorem0}, we conclude that $\Gc$ is decomposable if and only if $\Gc=\mathcal{K}\oplus \reals$ for some sub-algebra $\mathcal{K}\subseteq\Gc$. Hence, $\Gc$ is decomposable if and only if there is an element $x\in \Gc\setminus\Gc^1$ so that it is contained in the center of $\Gc$. Because $x\notin \Gc^1$, we get 
		\begin{equation}
		    x=y+z 
		\end{equation}
		for some $0\neq y\in\mathcal{L}$ and for some $z\in\Gc^1$. Since $x$ belongs to the center of $\Gc$ and $\Gc^1$ is commutative, we obtain 
		\begin{equation}
		    \adg{y}=\boldsymbol{0}.
		\end{equation}
		Therefore, $\Gc$ is decomposable if and only if the dimension of the vector space spanned by 
		\begin{equation*}
		    \{\adg{x_1},\ldots,\adg{x_{n-m}}\}
		\end{equation*}
		is strictly less than $n-m$, or $r<n-m$, which is our assertion.
    
    \vskip0.3cm
		\item In light of the equation (\ref{equ37}), what we need to show is the following statement: $\dim F^1(\langle \mathcal{S}\rangle)=\dim (\langle \mathcal{S}\rangle)$ for every $F\in(\Gc)^*$ with $F^1\neq 0$ if and only if every non-zero element of $\langle \mathcal{S}\rangle$ is an automorphism on the vector space $\Gc^1$. Equivalently, $\dim F^1(\langle \mathcal{S}\rangle)<\dim (\langle \mathcal{S}\rangle)$ for some $F\in(\Gc)^*$ with $F^1\neq 0$ if and only if there is a non-zero element $f$ of $\langle \mathcal{S}\rangle$ which is not an automorphism on the vector space $\Gc^1$.
	
	\vskip0.2cm	
		Indeed, the existence of a non-zero element $f\in\langle \mathcal{S}\rangle$ which is not an automorphism on the vector space $\Gc^1$ is equivalent to the existence of an element $0\neq x\in\Gc^1$ so that $f(x)=0$, hence is equivalent to the existence of an element $F\in(\Gc)^*$ so that 
		\begin{equation}\label{equF1f}
		    \left\{\begin{array}{ll}
		        F^1 \neq \boldsymbol{0},  \\
		        F^1\circ f =\boldsymbol{0}. 
		    \end{array}\right.
		\end{equation}
		If the equation (\ref{equF1f}) holds, then by choosing a basis for $\langle\mathcal{S}\rangle$ which contains $f$, we can see easily that the dimension of $F^1(\langle \mathcal{S}\rangle)$ is strictly less than the dimension of $\langle \mathcal{S}\rangle$.
	
	\vskip0.2cm
    	Conversely, if $\dim F^1(\langle \mathcal{S}\rangle)<\dim (\langle \mathcal{S}\rangle)$, then there is a basis $\{f_1,f_2,\dots,f_r\}$ of $\langle \mathcal{S}\rangle$ so that 
    	\begin{equation}
    	    F^1(\alpha f_1+\cdots+\alpha_rf_r)=\boldsymbol{0}
    	\end{equation} 
    	for some non-zero $(\alpha_1,\dots,\alpha_r)\in\reals^r$. This proves the existence of a non-zero element $f\in\langle \mathcal{S}\rangle$ so that the equation (\ref{equF1f}) holds. The proof is completed.
	\end{enumerate}		
	\end{proof}

To close this subsection, let's present the connection between 2-step MD-algebras with 1-step MD-algebras in the next theorem.
    \begin{theorem}\label{theorem3}
        Let $\Gc$ be an MD$_{k}(n)$-algebra. Then $\dim\Gc^2\leq n-k$. Furthermore, the quotient Lie algebra $\Hc:=\Gc/\Gc^2$ is also an MD$_k$-algebra.
    \end{theorem}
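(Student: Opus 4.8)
The plan is to realize the dual space $\Hc^{*}=(\Gc/\Gc^{2})^{*}$ as the annihilator $(\Gc^{2})^{\perp}=\{F\in\Gc^{*}:F|_{\Gc^{2}}=0\}$ inside $\Gc^{*}$, and to show that the Kirillov form is compatible with this identification. Throughout I assume $\Gc$ is non-commutative (otherwise $k$ is not defined), so that by solvability the derived series is strictly decreasing and $\Gc^{1}\supsetneq\Gc^{2}$. The crucial preliminary observation is that whenever $F\in\Gc^{*}$ satisfies $F|_{\Gc^{2}}=0$, the ideal $\Gc^{2}$ lies in the radical of $B_{F}$: indeed, for $x\in\Gc^{2}$ and any $y\in\Gc$ we have $[x,y]\in[\Gc^{2},\Gc]\subseteq\Gc^{2}$ because $\Gc^{2}$ is an ideal of $\Gc$, whence $B_{F}(x,y)=F([x,y])=0$.

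For the inequality $\dim\Gc^{2}\leq n-k$ I would first choose $F\in\Gc^{*}$ with $F|_{\Gc^{2}}=0$ but $F^{1}=F|_{\Gc^{1}}\neq0$; such an $F$ exists precisely because $\Gc^{1}/\Gc^{2}$ is non-trivial. For this $F$ the orbit $\Omega_{F}$ is non-trivial, so $\dim\Omega_{F}=k$ by the MD$_{k}$ hypothesis. On the other hand, the observation above gives $\Gc^{2}\subseteq\mathrm{rad}(B_{F})$, so by Proposition \ref{rankbf} we obtain $k=\dim\Omega_{F}=\rank B_{F}\leq n-\dim\Gc^{2}$, which rearranges to the desired bound.

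For the second assertion I would set up the exact correspondence between orbits. Writing $\pi:\Gc\to\Hc$ for the quotient map and, for $F\in(\Gc^{2})^{\perp}$, letting $\bar{F}\in\Hc^{*}$ be the induced functional (so $\bar{F}\circ\pi=F$), a direct computation gives $B_{\bar{F}}(\pi(x),\pi(y))=\bar{F}(\pi([x,y]))=F([x,y])=B_{F}(x,y)$. Thus $B_{\bar{F}}$ is exactly the form obtained from $B_{F}$ by passing to the quotient $\Gc/\Gc^{2}$; since $\Gc^{2}\subseteq\mathrm{rad}(B_{F})$ this passage preserves rank, so $\rank B_{\bar{F}}=\rank B_{F}$ and hence $\dim\Omega_{\bar{F}}=\dim\Omega_{F}$ by Proposition \ref{rankbf}. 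As $F$ ranges over $(\Gc^{2})^{\perp}$, the functional $\bar{F}$ ranges over all of $\Hc^{*}$, and each value $\dim\Omega_{\bar{F}}=\dim\Omega_{F}$ is either $0$ or $k$ because $\Gc$ is an MD$_{k}$-algebra. Since $\Hc$ is solvable (a quotient of a solvable algebra) and non-commutative (as $\Hc^{1}=\Gc^{1}/\Gc^{2}\neq0$), it has at least one non-trivial orbit, necessarily of dimension $k$; therefore $\Hc$ is an MD$_{k}$-algebra.

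The main obstacle, and the step deserving the most care, is the rank-preservation claim: that descending $B_{F}$ to $\Gc/\Gc^{2}$ leaves its rank unchanged. This rests on the elementary linear-algebra fact that if a subspace $W$ is contained in the radical of a bilinear form $B$ on $V$, then $B$ factors through $V/W$ with the same rank, applied to $W=\Gc^{2}\subseteq\mathrm{rad}(B_{F})$. Everything else — the identification $\Hc^{*}\cong(\Gc^{2})^{\perp}$, the existence of the separating functional $F$, and the fact that $\Gc^{2}$ being an ideal forces $[\Gc^{2},\Gc]\subseteq\Gc^{2}$ — is routine; I would only add the remark that the MD property for $\Hc$ is read off from its orbit dimensions via Proposition \ref{rankbf}, its associated MD-group being the simply connected group integrating $\Hc$.
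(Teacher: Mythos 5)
Your proof is correct and takes essentially the same approach as the paper's: the paper's computation with the adapted basis and $F=x^*_{n-m+1}$ (showing the last $p$ rows and columns of $B_F$ vanish) is exactly the coordinate form of your observation that $\Gc^2\subseteq\mathrm{rad}(B_F)$ whenever $F|_{\Gc^2}=0$, and its entrywise matching of Kirillov matrices between $\Gc$ and $\Hc=\Gc/\Gc^2$ is your rank-preserving descent of $B_F$ to the quotient via the identification $\Hc^*\cong(\Gc^2)^{\perp}$. Your basis-free phrasing is a clean equivalent, and it even makes explicit two points the paper leaves implicit or treats separately: the existence (from solvability) of $F$ with $F|_{\Gc^2}=0$ but $F^1\neq 0$, and why $\Hc$ actually attains a nontrivial orbit of dimension $k$.
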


    \begin{proof}
        The assertion of the theorem is obviously true when $\Gc^2$ is trivial. Therefore, we only need to consider the case $\dim \Gc^2=p > 0$. Denote by $m$ the dimension of $\Gc^1$ ($m > p$). Let's fix a basis $\{x_1,x_2,\dots,x_n\}$ of $\Gc$ so that $\Gc^1=\langle x_{n-m+1},\dots,x_n\rangle$ and $\Gc^2=\langle x_{n-p+1}, \dots,x_n\rangle$. Since $\Gc^2$ is an ideal of $\Gc$, $x^*_{n-m+1}([x_i,x_j])=0$ for every $i, j$ with $1\leq i\leq n$ and $n-p+1\leq j\leq n$. Therefore, by setting $F = x^*_{n-m+1}$ we get 
        \begin{equation}\label{eq36}
            B_F  
             =\left[\begin{array}{cl}
                M & \boldsymbol{0}\\
                \boldsymbol{0} & \boldsymbol{0}_{p\times p}
            \end{array}\right]
        \end{equation}
        for some square matrix $M$ of order $n-p$. It follows that the rank of $B_F$ is  at most $n-p$.
        On the other hand, because $\Gc$ is an $\MD{n}{k}$-algebra and $F^1 \neq 0$, we have
        \begin{equation}\label{eq37}
            \dim\Omega_F=k.
        \end{equation}
        It follows from Proposition \ref{rankbf} that
        \begin{equation}
            k=\dim\Omega_F= \rank B_F \leq n-p.
        \end{equation}
        In other words, 
        \begin{equation} \dim\Gc^2\leq n-k.\end{equation} 
        This proves the first part of the theorem.
        \\
        To prove the second part, we  still assume $\{x_1,x_2,\dots, x_n\}$ is a basis of $\Gc$ so that $\{x_{n-m+1},\dots, x_n\}$ and $\{x_{n-p+1},\dots,x_n\} (p<m)$ are bases of $\Gc^1$ and $\Gc^2$, respectively.  It is standard to check that
        \begin{equation}
            x_q^*([x_i,x_j]) = \overline{x_q^*}([x_i+\Gc^2,x_j+\Gc^2]) \text{ for any } 1\leq i,j\leq n, \text{ and } 1\leq q \leq  n-p,
        \end{equation}
        where $\overline{x_q^*}$ is the corresponding element of $x_q^*$ in $(\Hc^1)^*$. Therefore, the dimensions of the coadjoint orbits  $\Omega_{\overline{F}}$ (of $\Hc$) and $\Omega_{F}$ (of $\Gc$) are equal for any $F\in(\Gc)^*$ with $\overline{F}|_{\Hc^1}\neq 0$. This completes the proof.
    \end{proof}

\subsection{The complete classification of all MD\texorpdfstring{$\boldsymbol{_{n-1}(n)}$}{(n-1)(n)}-algebras}

    Now, we will state the last main result of the paper, which gives the complete classification of all indecomposable MD$_{n-1}(n)$-algebras for $n \geq 4$.  

    \begin{theorem}\label{main-theorem}
        Let $\Gc$ be an indecomposable MD$_{n-1}(n)$-algebra with $n\geq 4$. Then $\Gc$ is isomorphic to one of the followings: 
        \begin{enumerate}
            \item The real Heisenberg Lie algebra \[\mathfrak{h}_{2m+1}=\langle x_i,y_i,z:i=1,\dots,m\rangle, \] 
            where non-zero Lie brackets are given by $[x_i,y_i]=z$ for every $1\leq i\leq m$.
                
            \item \label{item7}The Lie algebra \[\mathfrak{s}_{5,45}=\langle x_1,x_2,y_1,y_2,z\rangle,\] where non-zero Lie brackets are given by
                \[
                    [x_1,y_1]=y_1, [x_1,y_2]=y_2, [x_1,z]=2z, [x_2,y_1]=y_2, [x_2,y_2]=-y_1, [y_1,y_2]=z.
                \]
        \end{enumerate}
    \end{theorem}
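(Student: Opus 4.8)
The plan is to organize the argument by the solvable index of $\Gc$. Because $\Gc$ is an MD$_{n-1}(n)$-algebra, every nontrivial $K$-orbit has the even dimension $n-1$ (Proposition \ref{rankbf}), so $n$ must be odd; and since $k=n-1$, Theorem \ref{theorem3} gives $\dim\Gc^2\le n-k=1$. Thus either $\Gc^2=\{0\}$ (so $\Gc$ is $1$-step solvable) or $\dim\Gc^2=1$ (so $\Gc$ is $2$-step solvable), and I would treat these cases separately. The unifying device is a reformulation of the MD-condition through Kirillov matrices: since $B_F$ depends on $F$ only through $F^1=F|_{\Gc^1}$, the assignment $\phi\mapsto B_\phi$, $B_\phi(x,y)=\phi([x,y])$, is an injective linear map from $(\Gc^1)^*$ into the skew forms, and $\Gc$ is MD$_{n-1}(n)$ precisely when every nonzero $B_\phi$ has rank $n-1$. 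A repeatedly used lemma will be: a commuting family $V\subseteq M_m(\reals)$ all of whose nonzero elements are invertible satisfies $\dim V\le 2$, since a common eigenvector of the complexified family gives a linear map $V\to\complex$ ($f\mapsto$ its eigenvalue) which is injective because an invertible $f$ has nonzero eigenvalue.

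For the $1$-step case write $d=\dim\Gc^1$. If $d=1$, say $\Gc^1=\langle z\rangle$, then $c:=B_{z^*}$ has rank $n-1$; writing $[x,y]=c(x,y)z$ and $\beta:=c(z,\cdot)$, the Jacobi identity reads $c\wedge\beta=0$, and as $\rank c=n-1\ge 4$ this forces $\beta=0$, so $z$ is central, $c$ is a symplectic form of corank $1$ with radical $\langle z\rangle$, and a symplectic basis exhibits $\Gc\cong\mathfrak{h}_n$. If $d\ge 2$ then $d>n-k=1$, so Theorem \ref{theorem1} gives $\Gc=\mathcal{L}\oplus_\rho\Gc^1$; indecomposability and Theorem \ref{theorem2} give $r:=\dim\langle\mathcal{S}\rangle=n-d$, and $\dim\Omega_F=2\dim F^1(\langle\mathcal{S}\rangle)=n-1$ with $\dim F^1(\langle\mathcal{S}\rangle)\le\min(r,d)$ pins $d$ to $\{(n-1)/2,(n+1)/2\}$. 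I would eliminate both: for $d=(n+1)/2$ the third assertion of Theorem \ref{theorem2} makes $\langle\mathcal{S}\rangle$ an all-invertible commuting family, so the lemma forces $r=(n-1)/2\le 2$ and a real-eigenvalue argument on the residual odd-dimensional $n=5$ finishes it; for $d=(n-1)/2$ the rank condition says $\langle\mathcal{S}\rangle u=\Gc^1$ for every $0\ne u\in\Gc^1$, i.e. $\langle\mathcal{S}\rangle$ acts irreducibly, impossible for $d\ge 3$ since a common eigenvector yields an invariant subspace of dimension at most $2$ (and $d=2$ fails because $M_2(\reals)$ has no $3$-dimensional commuting subspace). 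Hence $d=1$ and $\Gc\cong\mathfrak{h}_n$.

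For the $2$-step case I would pass to $\Hc:=\Gc/\Gc^2$, which by Theorem \ref{theorem3} is MD$_{n-1}$ of dimension $n-1$, hence an SMD-algebra, and is $1$-step solvable since $\Hc^2=(\Gc^2+\Gc^2)/\Gc^2=0$. A nontrivial commutative direct factor would drop the maximal orbit dimension below $\dim\Hc$, so Theorem \ref{theorem0} forces $\Hc$ to be indecomposable; running the Theorem \ref{theorem1}--\ref{theorem2} analysis on $\Hc$ produces an all-invertible commuting family of dimension $(n-1)/2$ on $\Hc^1$, whence the lemma gives $(n-1)/2\le 2$, i.e. $n\le 5$, and parity leaves only $n=5$ with $\Hc\cong\mathfrak{aff}(\complex)$, the $4$-dimensional algebra $[x_1,y_1]=y_1,\,[x_1,y_2]=y_2,\,[x_2,y_1]=y_2,\,[x_2,y_2]=-y_1$. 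Finally I would reconstruct $\Gc$ as an extension of $\Hc$ by the line $\Gc^2=\langle z\rangle$: the induced pairing $\wedge^2\Hc^1\to\langle z\rangle$ must be the nondegenerate form $[y_1,y_2]=z$, and evaluating $\ad{x_i}z=\ad{x_i}[y_1,y_2]$ by the Jacobi identity forces $[x_1,z]=2z$ and $[x_2,z]=0$, recovering exactly $\mathfrak{s}_{5,45}$.

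The main obstacle I anticipate is this $2$-step reconstruction: once $\Hc\cong\mathfrak{aff}(\complex)$ is identified, one must show the extension data are rigid — that the cocycle $\wedge^2\Hc^1\to\Gc^2$ is forced up to isomorphism to be the nondegenerate pairing, and that the remaining structure constants are then determined by Jacobi together with the demands that $\Gc$ be $\MD{5}{4}$ and indecomposable — so that no spurious non-isomorphic algebra appears. By contrast the $1$-step analysis is clean once the commuting--invertible lemma is available; there the only delicate bookkeeping is confirming the two boundary values $d=(n\pm1)/2$ and disposing of the low-dimensional remnants at $n=5$.
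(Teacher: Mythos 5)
Your proposal is correct in substance, but it takes a genuinely different route from the paper, whose proof is short precisely because it outsources each case to prior classifications: for $\dim\Gc^1=1$ it cites the classification of algebras with one-dimensional derived ideal in \cite{Vu-MD} to obtain the Heisenberg algebras; for $\Gc^1$ commutative of dimension $\geq 2$ it proves only that $n=5$ (via block triangularization, Proposition \ref{triangular}, and $\rank B_{x_n^*}\leq 4$) and then cites the five-dimensional tables of \cite{Vu-Shum} to conclude the case is empty; and for $\Gc^1$ noncommutative it invokes the classification of SMD-algebras in \cite[Theorem 1]{Son-Viet} to get $\dim\Hc\leq 4$ and then reads $\mathfrak{s}_{5,45}$ off the tables in \cite{Snob,Vu-MD}. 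You replace all of these citations with direct matrix-theoretic arguments: your commuting--all-invertible lemma ($\dim V\leq 2$ via a common eigenvector of the complexification, the eigenvalue map $V\to\complex$ being injective) simultaneously substitutes for the Son--Viet bound and drives the commutative case; your pinning $d\in\{(n-1)/2,(n+1)/2\}$, the odd-dimension real-eigenvalue argument, and the irreducibility argument (a commuting real family always has an invariant subspace of dimension at most $2$, and $M_2(\reals)$ admits no $3$-dimensional commuting subspace) reprove the emptiness that the paper imports from \cite{Vu-Shum}; and your Darboux-basis treatment of $d=1$ replaces \cite{Vu-MD}. What your route buys is self-containedness and arguments that visibly scale to $\MD{n}{k}$ with $n-k$ small, exactly the generalization the paper mentions but does not develop; what it costs is the reconstruction step you correctly flag as the main obstacle, which the paper sidesteps entirely by table lookup. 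To make that step airtight you should record that $[y_i,z]=0$ before the Jacobi computation (e.g.\ because the derived ideal of a solvable Lie algebra is nilpotent, so $\adgg{y_i}$ can have no nonzero eigenvalue on $\Gc^2=\langle z\rangle$), then your computation $\ad{x_1}[y_1,y_2]$ does force $[x_1,z]=2z$, $[x_2,z]=0$, and the residual terms in $[x_i,y_j]$ modulo $\langle z\rangle$ are absorbed by shifting basis vectors by multiples of $z$, using that $\adg{x_1}$ acts invertibly; likewise, in the $d=1$ case the implication that $c\wedge\beta=0$ with $\rank c\geq 4$ forces $\beta=0$ deserves the one-line justification that $c\wedge\beta=0$ with $\beta\neq 0$ gives $c=\beta\wedge\gamma$, hence $\rank c\leq 2$. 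These are routine completions, not gaps in the plan.
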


\begin{proof}
	  The proof will be divided into 3 cases.
	\begin{itemize}
		\item \textbf{Case 1.} Assume $\dim\Gc^1=1$. Followed by the classifications of solvable real Lie algebras having 1-dimensional derived ideal in \cite{Vu-MD}, we easily see that $\Gc$ is isomorphic to $\mathfrak{h}_{2m+1}$ with $2 \leq m \in \mathbb{N}$ because $\Gc$ is indecomposable and $n \geq 4$.
		
	\vskip0.3cm	
		\item \textbf{Case 2.} Assume that $\Gc^1$ is commutative and $\dim\Gc^1\geq 2$. We will show that this case is excluded.
		
		We first claim that $\dim \Gc$ is exactly five. Indeed, according to Theorem \ref{theorem1}, one can see that $\Gc=\mathcal{L}\oplus_{\rho} \Gc^1$ where $\mathcal{L}$ is a commutative subalgebra of $\Gc$ with $\mathcal{L}\cap \Gc^1=\{0\}$. Hence, we can choose a basis $\mathfrak{b}=\left\{x_1,x_2,\dots, x_n\right\}$ of $\Gc$ so that 
		\begin{equation}
    		\left\{\begin{array}{rl}
    		    \Gc^1= & \langle x_{n-m+1},x_{n-m+2},\dots,x_n\rangle   \\
    		     \mathcal{L} = & \langle x_1,x_2,\dots,x_{n-m}\rangle 
    		\end{array}\right..
		\end{equation}
		Because both $\Gc^1$ and $\mathcal{L}$ are commutative, we have
		\begin{equation}\label{eq38}
            [x_i,x_j]=0 \quad \text{if } \left[
                \begin{array}{c}
                    1\leq i,j \leq n-m \\ 
                    n-m+1\leq i,j\leq n
                \end{array}
                \right..
        \end{equation}
		
		On the other hand, the commutation of $\Gc^1$ implies that $\{\adg{x}:x\in \mathcal{L}\}$ is a family of commuting endomorphisms [Proposition \ref{commutative}]. As a consequence of the second item of Proposition \ref{triangular}, we can assume that the matrix of $\adg{x_i}$ with respect to the basis $\mathfrak{b}$ is of block upper triangular form in which each block is of size up to 2, for every $1\leq i\leq n-k$.
		With this assumption in hand, the equality {\rm (\ref{eq38})} implies that
		\begin{equation}
    		x_n^*([x_i,x_{j}]) = 0 \quad \text{if } \left[
                \begin{array}{c}
                    1\leq i,j \leq n-2 \\
                    n-1\leq i,j \leq n \\
                \end{array}
            \right..
		\end{equation}
		Therefore, 
		\begin{equation}
		    0\neq \dim\Omega_{x_n^*}=\rank B_{x_n^*} \leq 4
		\end{equation}
		Hence, $n-1=\dim\Omega_{x_n^*}\leq 4$, or $n\leq 5.$ Since $\dim\Omega_F$ is even for every $F\in\Gc^*$ and $n \geq 4$, $n$ must be equal to $5$.
		
		To the best of our knowledge, all 1-step solvable real Lie algebras of dimension 5 are completely classified by Jacqueline Dozias  \cite{5-dim,Snob}. In particular, Vu L. A. and K. P. Shum gave the classification of 1-step solvable MD-algebras of dimension 5 in \cite{Vu-Shum}. According to their classification, there is no indecomposable 1-step solvable MD-algebra of dimension 5 so that its maximal dimension of $K$-orbits is 4. Therefore, this case is excluded.
    
    \vskip0.3cm
		\item \textbf{Case 3.} Assume that $\Gc^1$ is non-commutative. If so, $\dim\Gc^1\geq 2$ and $\dim\Gc^2\geq 1$. It follows from Theorem \ref{theorem3} that $\dim\Gc^2\leq \dim\Gc-\dim\Omega_F=1$ for every $0\neq F\in (\Gc^1)^*$. Hence, $\dim\Gc^2=1$ and  $\Hc=\Gc/\Gc^2$ is a solvable real Lie algebra in which $\dim\Hc=n-1=\dim\Omega_F$ for every $0\neq F\in (\Gc^1)^*$. In other word, $\Hc$ is an indecomposable SMD-algebra. It follows from \cite[Theorem 1]{Son-Viet} that $n - 1 = \dim\Hc\leq 4$. Hence, $n \leq 5$. Once again, by using the classification of 5-dimensional Lie algebras in \cite{5-dim,Snob}, especially of those in which the first derived ideal is non-commutative in \cite{Vu-MD}, we get exactly one 5-dimensional Lie algebra satisfying the requirement which is defined in item \ref{item7}. Remark that this algebra is denoted as $\mathfrak{s}_{5,45}$ in \cite{Snob} and the proof is complete.
	\end{itemize}
\end{proof}

\begin{remark} To sharpen Theorem \ref{main-theorem}, using Theorem \ref{theorem0}, we complete the paper by the following remarks.
	\begin{enumerate}
		\item[(1)] 
		All solvable Lie algebras of dimension 3 are obviously MD-algebras. Therefore, except $\reals^3$ (the real 3-dimensional abelian Lie algebra), all the remaining solvable non-abelian ones are MD$_2(3)$-algebras. One can find their classification in \cite{Snob}.
		\item[(2)] 
		Any decomposable MD$_{n-1}(n)$-algebra is a direct sum of $\reals$ and an indecomposable non-abelian SMD-algebra of dimension $n-1$. Therefore, by applying directly the classification of SMD-algebras in \cite[Theorem 1]{Son-Viet}, we easily see that any decomposable MD$_{n-1}(n)$-algebra is isomorphic to either $\textnormal{aff}(\reals) \oplus \reals$ for $n = 3$ or $\textnormal{aff}(\complex) \oplus \reals$ for $n = 5$ where 
		\begin{itemize}
			\item 
			$\textnormal{aff}(\reals) = \langle x,y\rangle$ is defined by $[x, y] = y$;
			\item 
			$\textnormal{aff}(\complex)= \langle x_1,x_2,y_1,y_2\rangle$ is defined by $[x_1,x_2]=[y_1,y_2]=0$, 
			$[x_1,y_1]=y_1, [x_1,y_2]=y_2$, and  $[x_2,y_1]=y_2,[x_2,y_2]=-y_1.$
		\end{itemize}
	\end{enumerate}	        
\end{remark} 

\bigskip
{\bf Acknowledgment.} The first author would like to express his deep gratitude to his wife - Thuy Nguyen Thanh for her encouragement and support.


\end{document}